\theoremstyle{plain}
\theoremstyle{definition}
\newtheorem{definition}{Definition}[section]
\newtheorem{corollary}[definition]{Corollary}
\newtheorem{lemma}[definition]{Lemma}
\newtheorem{theorem}[definition]{Theorem}
\newtheorem{proposition}[definition]{Proposition}
\newcounter{claim}[definition]
\theoremstyle{remark}
\theoremstyle{remark}
\theoremstyle{definition}
\renewcommand{\epsilon}{\varepsilon}
\renewcommand{\bar}{\overline}
\renewcommand{\hat}{\widehat}
\renewcommand{\leq}{\leqslant}
\renewcommand{\geq}{\geqslant}
\newcommand{\N}{\mathbb{N}}
\newcommand{\Z}{\mathbb{Z}}
\newcommand{\Q}{\mathbb{Q}}
\newcommand{\FF}{\mathbb{F}}
\newcommand{\SL}{\operatorname{SL}}
\newcommand{\syl}{\mathrm{Syl}}
\newcommand{\GL}{\mathrm{GL}}
\newcommand{\0}{\emptyset}
\newcommand{\PSL}{\mathrm{PSL}}
\newcommand{\Aut}{\mathrm{Aut}}
\def \H {\mathrm H}
\def \ov {\overline}
\renewcommand{\hat}{\widehat}
\numberwithin{equation}{section}
\let\c@theorem\c@equation
\begin{document}

\author{Chris Parker}
\address{School of Mathematics, University of Birmingham, B15 2TT, United Kingdom}
\email{c.w.parker@bham.ac.uk}

\author{Martin van Beek}
\address{Department of Mathematics, University of Manchester, M13 9PL, United Kingdom}
\email{martin.vanbeek@manchester.ac.uk}

\keywords{Basic representations; modular representation theory; $p$-adic representations}
\subjclass[2020]{20C33; 20C11}

\title{Lifting Polynomial Representations of $\SL_2(p^r)$ from $\FF_p$ to $\Z/p^s\Z$}

\begin{abstract}
We describe all of the irreducible polynomial $\FF_p\SL_2(p^r)$ representations which lift to $(\Z/p^s\Z)\SL_2(p^r)$ representations for $s>1$, observing that they almost never do. We also show that two related indecomposable $\FF_p \SL_2(p^r)$ representations cannot be lifted to $\Z/p^s\Z$ representations for $s>1$.
\end{abstract}

\maketitle

 \section{Introduction}

The lifting problem (see \cref{liftdefn}) for modular representations of finite groups has been considered by different authors in a variety of settings over many years, and we mention only a small selection of this research  here. The theory has its origin in the work of Brauer. For a historical overview of the problem, we refer to Benson's text \cite[3.7]{benson}.

Throughout, we let $p$ be a prime, $\FF_{p^r}$ be the field of $p^r$ elements, $\mathbb{Q}_p$ the $p$-adics and $\Z_p$ the $p$-adic integers. Green \cite{Green} gave a sufficient homological condition for an $\FF_p$-representation to lift to a $\Z_p$-representation. In particular, this demonstrates that the inability of an $\FF_p$-representation to lift gives information about certain $\mathrm{Ext}$ groups of the module associated to the representation. Work of Tiep--Zalesski \cite[Theorem 4.5]{TiepZalesski} revealed conditions for $\FF_p$-representations of groups of Lie type in characteristic $p$ to lift to representations over $\Z/p^s\Z$. In this direction, our contribution is a finer analysis of when the homogeneous polynomial representations $V_i(p^r)$, $1 \le i \le p$, of $\SL_2(p^r)$ lift to $\Z/p^s\Z$ for $s\geq 2$, extending the work in \cite[Theorem 4.5]{TiepZalesski}.

\begin{theorem}\label{MT}
Suppose that $p$ is a prime, $r\in \N$, $G=\SL_2(p^r)$ and $V$ is an $\FF_pG$-module. If, for some $1\leq i\leq p$, we have that $V=V_i(p^r)$ or $V=\Lambda(p^r)$, then $V$ lifts to $\Z/p^2\Z$ if and only if $r=1$ and either
\begin{enumerate}
    \item $p=2$ and $V\in\{V_1(2), V_2(2)\}$; or
    \item $p$ is odd and $V\in\{V_{p-2}(p), V_{p-1}(p)\}$.
\end{enumerate}
Moreover, if $V$ lifts to $\Z/p^2\Z$, then $V$ lifts to $\Q_p$.
\end{theorem}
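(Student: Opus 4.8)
The plan is to prove the two implications separately, obtaining the closing sentence for free while proving the ``if'' direction. For ``if'' I would exhibit, in each of the four surviving cases, an explicit representation over $\Z_p$ whose reduction modulo $p$ is $V$; reducing it modulo $p^2$ gives the lift to $\Z/p^2\Z$, while the same lattice tensored with $\Q_p$ gives the ``Moreover''. Concretely: when $p=2$ and $r=1$ the group $G=\SL_2(2)$ has order $6$, the module $V_1(2)$ is the reduction of the integral $2$-dimensional reflection representation, and $\Sym^2$ of the natural module decomposes over $\FF_2$ as $V_1(2)\oplus\FF_2$ (a short $\Ext^1$-computation, using that the natural module is projective), so $V_2(2)$ lifts too. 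When $p$ is odd and $r=1$, $V_{p-1}(p)$ is the Steinberg module --- projective, and the reduction of the degree-$p$ Steinberg character --- while $V_{p-2}(p)$ is the unique $(p-1)$-dimensional irreducible $\FF_pG$-module, and I would realise it as the reduction of a degree-$(p-1)$ representation defined over $\Q_p$ (a cuspidal Deligne--Lusztig character $\pm R_{T}(\theta)$ for the non-split torus $T$: its character is $\Q_p$-rational because $\operatorname{Frob}_p$ inverts $\zeta_{p+1}$ and $R_{T}(\theta)=R_{T}(\theta^{-1})$).

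For ``only if'' I would first note that a lift to $\Z/p^s\Z$ with $s\ge 2$ reduces to one over $\Z/p^2\Z$, and then reduce to $r=1$. Restricting a hypothetical lift of $V_i(p^r)$ to the subfield subgroup $\SL_2(p)\le\SL_2(p^r)$, one uses that the natural module restricts as $\FF_{p^r}\otimes_{\FF_p}(\text{natural }\FF_p\SL_2(p)\text{-module})$, so $V_i(p^r)|_{\SL_2(p)}\cong V_i(p)^{\oplus r}$; writing $\widehat M$ for the restricted lift, the kernel of $\End_{\Z/p^2\Z[\SL_2(p)]}(\widehat M)\to\End_{\FF_p[\SL_2(p)]}(V_i(p)^{\oplus r})$ is square-zero, so idempotents lift and $V_i(p)$ itself lifts. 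The $r=1$ conclusion then forces $i\in\{p-2,p-1\}$ for $p$ odd, and it remains to rule out, when $r\ge 2$, the four modules $V_{p-2}(p^r),V_{p-1}(p^r),V_p(p^r),\Lambda(p^r)$ --- note $V_{p-1}(p^r)$ is \emph{not} the Steinberg module of $\SL_2(p^r)$ once $r\ge 2$ --- which I would handle by restricting to the unipotent radical $U\cong(\FF_{p^r},+)$, an elementary abelian group of rank $r\ge 2$ whose action on these polynomial modules is non-degenerate enough that the $\Z/p^2\Z[U]$-lifting equations already have no solution.

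The heart of the matter is the case $r=1$. Using that $G=\SL_2(p)$ is generated (for $p\ge5$; small $p$ by hand) by $u=\left(\begin{smallmatrix}1&1\\0&1\end{smallmatrix}\right)$ and its transpose $v$, with $w^2=-I$ holding for the Weyl element $w=uv^{-1}u$, a lift of $V$ is a choice of invertible matrices $\widehat u,\widehat v$ over $\Z/p^2\Z$ reducing to $\rho(u),\rho(v)$, each of order $p$, satisfying the remaining relations. Writing $\widehat u=I+\widehat N$ and expanding $\widehat u^{\,p}=\sum_k\binom pk\widehat N^k$ modulo $p^2$, the order-$p$ condition pins down the ``$p$-part'' of $\widehat N$ in terms of its reduction via a truncated logarithm (controlling $\tfrac1p\binom pk\equiv\tfrac{(-1)^{k-1}}{k}\bmod p$); feeding this and the analogous data for $\widehat v$ into $w^2=-I$ produces an obstruction in a subquotient of $\End_{\FF_p}(V)$ --- equivalently, the canonical primary obstruction to lifting $V$, a class in $\Ext^2_{\FF_pG}(V,V)\cong H^2(G,\End_{\FF_p}(V))$ --- which I would compute, using the known cohomology of $\SL_2(p)$ with coefficients in tensor products of symmetric powers, to be non-zero precisely outside the exceptional list. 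The main obstacle is exactly this computation: it does not suffice that $\Ext^2_{\FF_pG}(V,V)$ be non-zero (it often is, even for modules that do lift, such as the trivial module), so one must identify the obstruction class itself, push through the combinatorics of the polynomial action and of binomial coefficients modulo $p^2$, and then verify that the obstruction genuinely degenerates for $\Sym^{p-2}$ and $\Sym^{p-1}$ but for no other $V_i(p)$, and for neither $V_p(p)$ nor $\Lambda(p)$.
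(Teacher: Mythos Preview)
Your approach is genuinely different from the paper's. The paper carries out a single uniform matrix computation inside the Borel subgroup $\langle \alpha,\gamma\rangle = U\rtimes T$ of $\SL_2(p^r)$, working in $\GL_{(n+1)r}(\Z/p^2\Z)$ for all $r$ simultaneously: the condition $(a+q)^p=I$ on a lift of the unipotent is combined with the commutation $[a+q,(a+q)^c]=1$, where crucially the second element is the \emph{$c$-conjugate} of the first (so its perturbation $q^c$ is determined by $q$), and block-by-block analysis forces $r=1$. Your subfield--restriction idea (split off a copy of $V_i(p)$ from $W|_{\SL_2(p)}$ via idempotent lifting) is elegant and does work for $p$ odd and $1\le i\le p$, because the surjectivity of $\End_{\Z/p^2\Z[\SL_2(p)]}(W)\to\End_{\FF_p[\SL_2(p)]}(V_i(p)^{\oplus r})$ needed to get the idempotent into the image follows from $\Ext^1_{\FF_p\SL_2(p)}(V_i,V_i)=0$, which holds by linkage. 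But note this buys nothing exactly in the cases that matter: once you know $V_{p-2}(p)$ and $V_{p-1}(p)$ \emph{do} lift, the subfield reduction gives no obstruction for $V_{p-2}(p^r),V_{p-1}(p^r)$ with $r\ge2$; and for $p=2$ every $V_i(2)$ on the list lifts, so again no obstruction.

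The real gap is therefore your $U$-restriction claim. You assert that for $r\ge2$ the $\Z/p^2\Z[U]$-lifting equations ``already have no solution'', but this is not justified, and it is precisely here that the paper needs more than $U$: its argument exploits that the lift of a second unipotent generator is \emph{forced} to be $(a+q)^c$ for a fixed torus element $c$ already chosen in $H$, which ties $q^c$ to $q$ and yields the decisive constraint $C^{2p-2}=I_r$ (for $n=p-2$) or its analogues. If instead you lift two independent generators $\alpha_1,\alpha_\mu$ of $U$ to $a+q$ and $a'+q'$ with $q,q'$ unrelated, you have strictly more freedom, and it is not clear the order-$p$ and commutation constraints alone suffice. (A hand check for $p=3$, $n=1$, $r=2$ suggests they might, but the general case is far from obvious and would itself be a substantial computation comparable to the paper's.) Even granting that step, your plan still leaves the ``main obstacle'' --- identifying the actual obstruction class in $\Ext^2$ for $r=1$, in particular for $V_p(p)$ and $\Lambda(p)$ --- entirely undone; the paper avoids this by the same direct block-matrix analysis, together with two MAGMA verifications for $V_2(9)$ and $V_2(4)$, and handles $\Lambda$ by dualising.
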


We recall that $V$ lifts to $\Q_p$ if there is a $\Q_pG$-module $U$ and a $G$-stable $\Z_p$-lattice $W\subseteq X$ such that $W/pW\cong V$ as $\FF_pG$-modules. We provide a description of the modules $V_i(p^r)$ and $\Lambda(p^r)$ in \cref{sec:structureV}. The proof of \cref{MT} is a rather intricate calculation.

A subgroup $H$ of $\GL_n(\Z/p^s\Z)$ is said to be \emph{irreducible} if $H$ preserves no proper non-trivial free $\Z/p^s\Z$-submodule of the natural $\Z/p^s\Z$-module for $\GL_n(\Z/p^s\Z)$ upon restriction. Equivalently, $H$ is an irreducible subgroup of $\GL_n(\Z/p^s\Z)$ if $\pi(H)$ is an irreducible subgroup of $\GL_n(\Z/p\Z)$, where $\pi$ is the natural projection $\pi :\GL_n(\Z/p^s\Z) \rightarrow \GL_n(\Z/p\Z)$. Our theorem, along with that of \cite[Theorem 4.5]{TiepZalesski} and \cite{KS}, addresses the broader question of which quasisimple groups are \emph{irreducible} subgroups of $\GL_n(\Z/p^s\Z)$ with $s \ge 1$. We believe this question to be of fundamental interest and whose answer may have broader applications.

\sloppy{Set $Q=O_p(\GL_n(\Z/p^2\Z))$ and assume that $W$ is the natural $\Z/p^2\Z$-module for $\GL_n(\Z/p^2\Z)$. If $G \le \GL_n(\Z/p^2\Z)$ and $U=W/pW$ is regarded as an $\FF_pG$-module, then $Q \cong U\otimes_{ \FF_p}U^*$ as an $\FF_pG$-module with action given by conjugation. With this in mind, our theorem shows that with $V$ and $G$ as described in \cref{MT}, the failure of $G$ to lift to $\Z/p^2\Z$ exhibits non-zero second degree cohomology:}

\begin{corollary}
With $V$ and $G$ as described in \cref{MT} we have $\mathrm H^2(G,V\otimes_{\FF_p} V^*)\ne 0$ unless $V\in\{V_{p-2}(p), V_{p-1}(p), V_2(2)\}$. Equivalently, $\mathrm{Ext}_{\FF_pG}^2(V,V)\ne 0$ unless $V\in\{V_{p-2}(p), V_{p-1}(p), V_2(2)\}$.
\end{corollary}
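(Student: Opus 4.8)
The plan is to obtain the corollary by combining \cref{MT} with the classical obstruction theory for lifting a modular representation one step along $\Z/p^2\Z\to\FF_p$. Let $V$ be one of the modules in the statement, put $n=\dim_{\FF_p}V$, and let $\rho\colon G\to\GL_n(\FF_p)$ be the associated representation. For each $g\in G$ choose a set-theoretic lift $\widetilde\rho(g)\in\GL_n(\Z/p^2\Z)$ of $\rho(g)$. The kernel of the reduction $\GL_n(\Z/p^2\Z)\to\GL_n(\FF_p)$ is $\{1+pA : A\in\Mat_n(\FF_p)\}$, an abelian group isomorphic to $(\Mat_n(\FF_p),+)$ on which $G$ acts, through conjugation by the $\widetilde\rho(g)$, by $1+pA\mapsto 1+p\,\rho(g)A\rho(g)^{-1}$; thus this kernel is the $\FF_pG$-module $\End_{\FF_p}(V)$ equipped with the adjoint action. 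The elements $\widetilde\rho(g)\widetilde\rho(h)\widetilde\rho(gh)^{-1}=1+p\,c(g,h)$ then define a $2$-cocycle $c$ on $G$ valued in $\End_{\FF_p}(V)$, whose class in $\H^2(G,\End_{\FF_p}(V))$ is independent of the choices and vanishes precisely when $\rho$ lifts to $\GL_n(\Z/p^2\Z)$. As recorded in the paragraph preceding the corollary, $\End_{\FF_p}(V)\cong V\otimes_{\FF_p}V^*$ as $\FF_pG$-modules. Hence, if $V$ does not lift to $\Z/p^2\Z$, then $\H^2(G,V\otimes_{\FF_p}V^*)\ne 0$; the $\Ext$ formulation follows from the standard identification $\H^2(G,V\otimes_{\FF_p}V^*)=\H^2(G,\Hom_{\FF_p}(V,V))=\Ext^2_{\FF_pG}(V,V)$.

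It then remains only to observe that this already covers every case of the corollary. By \cref{MT}, among the modules $V_i(p^r)$ with $1\leq i\leq p$ and $\Lambda(p^r)$, the ones that lift to $\Z/p^2\Z$ are exactly $V_{p-2}(p)$ and $V_{p-1}(p)$ when $p$ is odd, and $V_1(2)$ and $V_2(2)$ when $p=2$. These coincide with the modules on the exception list of the corollary: for $p=2$ the entry $V_{p-1}(p)$ is $V_1(2)$, while $V_{p-2}(p)=V_0(2)$ is not among the modules under consideration, and $V_2(2)$ appears explicitly in the list; for $p$ odd the entry $V_2(2)$ is a module of the group $\SL_2(2)\ne G$ and so never arises. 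Therefore every module $V$ in the statement that is not on the exception list fails, by \cref{MT}, to lift to $\Z/p^2\Z$, and so the obstruction argument of the first paragraph yields $\H^2(G,V\otimes_{\FF_p}V^*)\ne 0$, which is exactly what is claimed.

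In short, no computation beyond \cref{MT} is needed; the one place that requires attention — and hence the main, if minor, obstacle — is checking that the $G$-action on the kernel $1+p\Mat_n(\FF_p)$ is the adjoint action, so that the lifting obstruction lives in $\H^2(G,\End_{\FF_p}(V))=\H^2(G,V\otimes_{\FF_p}V^*)$ rather than in any other module. One could alternatively deduce the nonvanishing conclusions from Green's homological lifting criterion \cite{Green}, but working with the explicit cocycle makes the relevant module transparent with no extra work.
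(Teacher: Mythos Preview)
Your argument is correct and matches the paper's own approach: the paper does not give a separate proof of the corollary, but the paragraph immediately preceding it identifies $Q=\ker(\GL_n(\Z/p^2\Z)\to\GL_n(p))$ with $V\otimes_{\FF_p}V^*$ under the conjugation action, so that the obstruction to lifting lies in $\H^2(G,V\otimes_{\FF_p}V^*)$, and then invokes \cref{MT}. Your write-up simply makes this explicit via the standard $2$-cocycle construction, and your case analysis matching the exception list of the corollary with the liftable modules of \cref{MT} (in particular $V_{p-1}(2)=V_1(2)$ and $\Lambda(2)\cong V_2(2)$) is accurate.
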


Our methodology for proving \cref{MT} is as follows: Recognizing $V$ as an $\FF_{p^r}\SL_2(p^r)$-module, we write $k:=\dim_{\FF_{p^r}}V$. We observe that if $V$ lifts to $\Z/p^s\Z$ for some $s>1$ then it lifts to $\Z/p^2\Z$, and so we assume that there is a subgroup $H$ of $\GL_{kr}(\Z/p^2\Z)$ isomorphic $\mathrm{(P)SL}_2(p^r)$ such that the image of $H$ in $\GL_{kr}(\FF_p)$ under the natural projection corresponds to the action of $\SL_2(p^r)$ on $V$. In particular, the Sylow $p$-subgroups of $H$ are elementary abelian and there is a toral element in $H$ which preserves a Sylow $p$-subgroup. For $a\in H$ of order $p$, we consider the elements of order $p$ in the coset $aQ$, where $Q=O_p(\GL_{kr}(\Z/p^2\Z))$. The presence of such an element matching the action of $a$ provides certain equations in $\mathrm{M}_{kr}(\Z/p^2\Z)$ which must be satisfied. Then the action of a toral element on the elements of order $p$ in $H$ provides yet more equations which must also be satisfied.

Fortunately, the equations produced by focusing on only a handful of $r\times r$-block matrices provide enough restriction to determine the candidates for $V$ in \cref{MT} which do not lift to $\Z/p^2\Z$. Because of this, we never need to determine complete $(kr\times kr)$-matrices and this is one of the reasons our approach to this problem is successful. Finally, the modules in \cref{MT} are either projective, and lift to $\Q_p$ by \cref{Projective}, or have been shown to lift to $\Q_p$ in \cite[Proposition 3.8]{p.index3}, completing the proof of the result.

The authors' own interest in this result stems from its application to classifying saturated fusion systems with a normal abelian essential subgroup, as part of a project with other collaborators. This future work aims to generalize work of Oliver and others (\cite{p.index1}, \cite{p.index2}, \cite{p.index3}), and the results in this paper are inspired by the results of Oliver and Ruiz in the final paper of that work.

\textit{Acknowledgements:} The second author is supported by the Heilbronn Institute for Mathematical Research. The authors wish to thank Valentina Grazian, Justin Lynd, Bob Oliver and Jason Semeraro for their input during the workshop ``Patterns in Exotic Fusion Systems" and the Heilbronn Institute for Mathematical Research for funding the workshop.

We thank the anonymous referee for their comments which greatly improved the exposition of this work.

\section{Preliminaries}

We are concerned with the question of when modules defined over $\FF_p$ \emph{lift} to $\Z/p^s \Z$ for various $s\in \N$.

\begin{definition}\label{liftdefn1}
Let $G$ be a finite group, $\theta: G \rightarrow \GL_n(p)$ a representation of $G$. Then $(G, \theta)$ is \emph{liftable} to $\Z/p^s\Z$ if there is $\widehat \theta : G \rightarrow \GL_n(\Z/p^s\Z)$  such that with $\pi_s:\GL_n(\Z/ p^s\Z)\rightarrow \GL_n(p)$ the natural projection, we have $\theta=\widehat \theta \circ\pi_s$.
\end{definition}

Identifying $H$ with the image of $\theta$, we can reformulate \cref{liftdefn1} as follows.

\begin{definition}\label{liftdefn}
Let $G$ be a finite group, $V$ a faithful $n$-dimensional $\FF_pG$-module and identify $G\le \GL_n(p)$. Then $(G, V)$ is \emph{liftable} to $\Z/p^s\Z$ if there is $H\le \GL_n(\Z/p^s\Z)$ with $H$ mapping isomorphically to $G$ under the natural projection $\pi_s$.
\end{definition}

A further formulation is that there is an $n$-dimensional $(\Z/ p^s\Z)G$-module $W$ which is free as a $\Z/p^2\Z$-module (so a direct product of $n$ copies of $C_{p^s}$ on which $G$ acts) such that $W/pW$ is isomorphic to $V$ as an $\FF_pG$-module. We will generally suppress the group $G$ and simply say that $V$ lifts.

\begin{lemma}\label{lem: ppower}
Let $G$ be a finite group, and $W$ a $(\Z/p^2\Z)G$-module with $W$ free as a $\Z/p^2\Z$-module. Then $pW\cong W/pW$ as $\FF_p G$-modules.
\end{lemma}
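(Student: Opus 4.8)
The plan is to write down an explicit $\FF_pG$-module isomorphism given by multiplication by $p$. Consider the map $\phi\colon W\to W$ defined by $\phi(w)=pw$. Since $p$ is central in $\Z/p^2\Z G$, the map $\phi$ is a homomorphism of $\Z/p^2\Z G$-modules, and its image is exactly $pW$. The whole content of the lemma is then the identification $\ker\phi=pW$, after which the first isomorphism theorem for $\Z/p^2\Z G$-modules gives $W/pW=W/\ker\phi\cong \operatorname{im}\phi=pW$, and one checks this is an isomorphism of $\FF_pG$-modules.

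To compute $\ker\phi$ I would use freeness. Since $W$ is a free $\Z/p^2\Z G$-module and $\Z/p^2\Z G$ is free as a $\Z/p^2\Z$-module with basis $G$, the module $W$ is free as a $\Z/p^2\Z$-module; fix a $\Z/p^2\Z$-basis $(e_j)_{j\in J}$. Writing $w=\sum_j a_je_j$ with $a_j\in\Z/p^2\Z$ (almost all zero), we get $pw=\sum_j(pa_j)e_j$, which vanishes precisely when $pa_j=0$ for all $j$, that is when each $a_j\in p\Z/p^2\Z$, equivalently $w\in pW$. Hence $\ker\phi=pW$. Finally, $p$ annihilates $W/pW$ by definition and annihilates $pW$ because $p\cdot pW=p^2W=0$; so both sides are $\FF_pG$-modules in the natural way and the isomorphism above respects this structure.

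I do not expect a genuine obstacle here: the statement is soft and structural. The one point requiring care is the computation $\ker\phi=pW$, where freeness is genuinely used — indeed the conclusion fails for a general $\Z/p^2\Z G$-module (take $W=\FF_p$ with trivial $G$-action, where $pW=0\neq W=W/pW$) — so I would make sure to invoke freeness at exactly that step and not implicitly assume it holds more broadly.
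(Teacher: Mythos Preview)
Your proof is correct and follows the same approach as the paper: both define $\phi$ to be multiplication by $p$ (the paper phrases it as the ``$p$th-power map'' in abelian-group language), observe it is a $G$-map with image $pW$ and kernel $pW$, and apply the first isomorphism theorem. Your version is in fact more careful than the paper's, which simply asserts $\ker\phi=pW$ without invoking freeness explicitly; your computation with a $\Z/p^2\Z$-basis and your counterexample showing freeness is genuinely needed are nice additions.
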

\begin{proof}
We have that $pW$ and $W/pW$ are elementary abelian $p$-groups. Set $\phi: W\to pW$ to be the $p$th-power map on $W$. Then $\phi$ commutes with the $G$-action on $W$, and so $\phi$ is a $G$-map with kernel $pW$. Hence, $W/pW\cong pW$ as $\FF_p G$-modules.
\end{proof}

We record a result of Green which reiterates our comments from the introduction and provides a characterization of when modules can lift.

\begin{theorem}\cite[Theorem 1]{Green}\label{Green}
Let $G$ be a finite group and $V$ an $\FF_p G$-module such that $\mathrm{Ext}^2_{\FF_pG}(V, V)=0$. Then $V$ lifts to a $\Q_p G$-module.
\end{theorem}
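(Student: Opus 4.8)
The plan is to lift $V$ one power of $p$ at a time and then pass to the inverse limit, with the hypothesis $\mathrm{Ext}^2_{\FF_pG}(V,V)=0$ killing the obstruction at each stage. Write $d=\dim_{\FF_p}V$ and fix the representation $\rho_1\colon G\to\GL_d(\FF_p)$ afforded by $V$. Suppose inductively that we have a representation $\rho_n\colon G\to\GL_d(\Z/p^n\Z)$ reducing to $\rho_1$ modulo $p$ (the case $n=1$ being $\rho_1$ itself), equivalently a $\Z/p^n\Z G$-module $W_n$, free of rank $d$ over $\Z/p^n\Z$, with $W_n/pW_n\cong V$. I want to produce $\rho_{n+1}\colon G\to\GL_d(\Z/p^{n+1}\Z)$ reducing to $\rho_n$ modulo $p^n$.

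First I would choose, for each $g\in G$, a set-theoretic lift $\widetilde\rho(g)\in\GL_d(\Z/p^{n+1}\Z)$ of $\rho_n(g)$, and form the ``defect'' $c(g,h)=\widetilde\rho(g)\,\widetilde\rho(h)\,\widetilde\rho(gh)^{-1}$. Since $\widetilde\rho(g)\widetilde\rho(h)$ and $\widetilde\rho(gh)$ both reduce to $\rho_n(gh)$ modulo $p^n$, the element $c(g,h)$ lies in the kernel of $\GL_d(\Z/p^{n+1}\Z)\to\GL_d(\Z/p^n\Z)$, which is $1+p^n\Mat_d(\FF_p)$ and is, as an abelian group, isomorphic to $\End_{\FF_p}(V)$ via $1+p^n\gamma\mapsto\gamma$; this identifies the function $(g,h)\mapsto\gamma(g,h)$ defined by $c(g,h)=1+p^n\gamma(g,h)$ with a map $G\times G\to\End_{\FF_p}(V)$. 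The conjugation action of $\GL_d(\Z/p^{n+1}\Z)$ on $p^n\Mat_d(\FF_p)$ factors through reduction modulo $p$, so here $G$ acts on $\End_{\FF_p}(V)$ by $g\cdot\phi=\rho_1(g)\phi\rho_1(g)^{-1}$, i.e.\ via the standard $\FF_pG$-module structure on $\End_{\FF_p}(V)\cong\Hom_{\FF_p}(V,V)$. Expanding associativity $(\widetilde\rho(g)\widetilde\rho(h))\widetilde\rho(k)=\widetilde\rho(g)(\widetilde\rho(h)\widetilde\rho(k))$ modulo $p^{n+1}$ gives the $2$-cocycle identity $g\cdot\gamma(h,k)-\gamma(gh,k)+\gamma(g,hk)-\gamma(g,h)=0$, while replacing $\widetilde\rho(g)$ by $(1+p^n\beta(g))\widetilde\rho(g)$ changes $\gamma$ by the coboundary of $\beta$; hence $\gamma$ has a well-defined class $[\gamma]\in\H^2(G,\End_{\FF_p}(V))$, which vanishes precisely when the lifts can be rechosen to make $\widetilde\rho$ a homomorphism $\rho_{n+1}$. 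But $\H^2(G,\Hom_{\FF_p}(V,V))\cong\mathrm{Ext}^2_{\FF_pG}(V,V)=0$ by hypothesis, so $[\gamma]=0$ and $\rho_{n+1}$ exists and reduces to $\rho_n$.

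Iterating from $\rho_1$ produces a compatible tower $\cdots\to W_{n+1}\to W_n\to\cdots\to W_1=V$ in which each transition map is reduction modulo $p^n$ and each $W_n$ is free of rank $d$ over $\Z/p^n\Z$. Then I would set $W=\varprojlim_n W_n$; since $\Z_p=\varprojlim_n\Z/p^n\Z$ and the transition maps are surjective, $W$ is a free $\Z_p$-module of rank $d$, the $G$-actions glue to a $\Z_pG$-module structure, and $W/pW\cong W_1=V$ as $\FF_pG$-modules. Thus $V$ lifts to the $\Z_pG$-module $W$.

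The main obstacle — really the only nonformal point — is the identification $\H^2(G,\Hom_{\FF_p}(V,V))\cong\mathrm{Ext}^2_{\FF_pG}(V,V)$ together with the verification that the obstruction class $[\gamma]$ genuinely lands in this group and is independent of all choices; the cocycle and coboundary identities themselves are a bookkeeping exercise with matrix arithmetic modulo $p^{n+1}$ (using $p^{2n}\equiv0$), but one must be careful that the $G$-action on the coefficient module is the conjugation action coming from $\rho_1$, which is exactly the module $\Hom_{\FF_p}(V,V)$ whose second cohomology computes $\mathrm{Ext}^2$.
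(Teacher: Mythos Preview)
The paper does not prove this theorem at all: it simply records it as \cite[Theorem 1]{Green} with no argument given. Your proposal is a correct sketch of the standard obstruction-theoretic proof (essentially Green's original approach): lift $\rho_n$ set-theoretically to $\GL_d(\Z/p^{n+1}\Z)$, identify the failure of multiplicativity as a $2$-cocycle with values in $\End_{\FF_p}(V)$ under the conjugation $G$-action, invoke $\H^2(G,\Hom_{\FF_p}(V,V))\cong\Ext^2_{\FF_pG}(V,V)=0$ to adjust the lift, and pass to the inverse limit. The bookkeeping you flag (cocycle/coboundary identities, the $G$-action on coefficients coming from $\rho_1$, the $\H^2\cong\Ext^2$ identification) is handled correctly.
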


We remark that the condition $\mathrm{Ext}^2_{\FF_pG}(V, V)=0$ is equivalent to the requirement that every extension of $G$ by $V\otimes_{\FF_p} V^*$ splits as a semidirect product \cite[Proposition 3.1.8, Proposition 3.7.3]{benson}.

\begin{corollary}\label{Projective}
 Let $G$ be a finite group and $V$ a finitely generated, projective $\FF_p G$-module. Then $V$ lifts to a $\Q_p G$-module.
\end{corollary}
\begin{proof}
Since $V$ is projective and finitely generated, so too is $V\otimes_{\FF_p} V^*$. Hence, by \cite[Proposition 3.1.8]{benson} we see that $\mathrm{Ext}^2_{\FF_pG}(V, V)=\H^2(G, V\otimes_{\FF_p} V^*)=0$, as required.
\end{proof}

Another consequence of \cref{Green} is the following corollary which motivates our approach to proving the main theorem.

\begin{corollary}\label{CorGreen}
Let $G$ be a finite group, $S\in\syl_p(G)$, $H:=N_G(S)$ and $V$ an $\FF_pG$-module. Assume that $T\in\syl_p(G)$ satisfies $S\cap T=1$ whenever $S\ne T$. Then $V$ lifts to a $\Z_p G$-module if and only if $V|_H$ lifts to an $\Z_p H$-module.
\end{corollary}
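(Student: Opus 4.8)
The forward implication is immediate: if $W$ is a $\Z_pG$-lattice with $W/pW\cong V$, then $\mathrm{Res}_H^G W$ is a $\Z_pH$-lattice lifting $V|_H$. For the converse my plan is to realise $V$ as a direct summand of an induced module which visibly lifts, and then lift the corresponding idempotent. The number of Sylow $p$-subgroups of $G$ equals $[G:H]$ and is $\equiv 1\pmod p$, so $[G:H]$ is a unit in $\Z_p$; hence the usual averaging argument shows that $V$ is a direct summand of $\mathrm{Ind}_H^G(V|_H)$, with idempotent $e=\tfrac1{[G:H]}\iota\varepsilon\in\End_{\FF_pG}(\mathrm{Ind}_H^G(V|_H))$, where $\varepsilon\colon\mathrm{Ind}_H^G(V|_H)\to V$, $g\otimes v\mapsto gv$, is the counit and $\iota\colon V\to\mathrm{Ind}_H^G(V|_H)$, $v\mapsto\sum_{gH\in G/H}g\otimes g^{-1}v$, is the unit of the opposite adjunction, so that $\varepsilon\iota=[G:H]\cdot\mathrm{id}_V$ and $e(\mathrm{Ind}_H^G(V|_H))\cong V$. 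Choosing a $\Z_pH$-lattice $\hat M$ with $\hat M/p\hat M\cong V|_H$ (which exists by hypothesis), set $W:=\mathrm{Ind}_H^G\hat M$; this is a $\Z_pG$-lattice with $W/pW\cong\mathrm{Ind}_H^G(V|_H)$, and it then suffices to lift $e$ to an idempotent of $\End_{\Z_pG}(W)$, since the image of such an idempotent is a $\Z_pG$-lattice reducing to $V$.

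Lifting the idempotent is governed by a single obstruction class, which I would compute and show to vanish. Applying $\Hom_{\Z_pG}(W,-)$ to $0\to W\xrightarrow{p}W\to W/pW\to 0$ yields a connecting map $\delta\colon\End_{\FF_pG}(W/pW)\to\Ext^1_{\Z_pG}(W,W)$ whose kernel is exactly the image of $\End_{\Z_pG}(W)/p\End_{\Z_pG}(W)$ (here one uses that $W$ is $\Z_p$-free). Since $\End_{\Z_pG}(W)$ is finitely generated over $\Z_p$, it is $p$-adically complete with $p$ in its Jacobson radical, so idempotents lift from $\End_{\Z_pG}(W)/p\End_{\Z_pG}(W)$; thus it is enough to prove $\delta(e)=0$. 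By adjunction (Shapiro's lemma) $\Ext^1_{\Z_pG}(W,W)\cong\Ext^1_{\Z_pH}(\hat M,\mathrm{Res}_H^GW)$ compatibly with $\delta$, and the Mackey formula gives $\mathrm{Res}_H^GW\cong\hat M\oplus Y$, where $Y$ is a direct sum of $\Z_pH$-lattices induced from the subgroups $H\cap{}^gH$ with $g\notin H$. Tracking $e$ through these identifications shows that its $\hat M$-component is $\tfrac1{[G:H]}$ times the reduction map $\hat M\to V|_H$, i.e. $\tfrac1{[G:H]}\,\mathrm{id}_{V|_H}$, which lifts to $\tfrac1{[G:H]}\,\mathrm{id}_{\hat M}$ and so is killed by $\delta$, while the remaining component of $\delta(e)$ lies in $\Ext^1_{\Z_pH}(\hat M,Y)$.

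The heart of the matter is therefore the vanishing $\Ext^1_{\Z_pH}(\hat M,Y)=0$, and this is exactly where the trivial intersection hypothesis enters. If $g\notin H$ and $x\in S$ normalises ${}^gS$, then $\langle x\rangle\,{}^gS$ is a $p$-group containing the Sylow subgroup ${}^gS$, hence $x\in{}^gS\cap S=\{1\}$; since $S=O_p(H)$ is the unique Sylow $p$-subgroup of $H$, it follows that $H\cap{}^gH$ has order prime to $p$ for every $g\notin H$. For any $p'$-subgroup $H'\le H$ one has $\mathrm{H}^n(H',-)=0$ on $\Z_pH'$-modules for $n\ge 1$, so for every $\Z_pH'$-lattice $N$, $\Ext^1_{\Z_pH}(\hat M,\mathrm{Ind}_{H'}^HN)\cong\Ext^1_{\Z_pH'}(\mathrm{Res}_{H'}^H\hat M,N)=0$; summing over the summands of $Y$ gives $\Ext^1_{\Z_pH}(\hat M,Y)=0$, hence $\delta(e)=0$, completing the proof. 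The one genuinely fiddly step is the bookkeeping in the second paragraph — checking that $\delta$ respects the Shapiro and Mackey decompositions and correctly identifying the components of $e$; the rest is formal. (Alternatively one could lift $V$ successively over the rings $\Z/p^s\Z$ by deformation theory, the point being that a transfer-plus-stable-elements argument makes restriction an isomorphism $\Ext^n_{\FF_pG}(V,V)\xrightarrow{\sim}\Ext^n_{\FF_pH}(V|_H,V|_H)$ for all $n\ge 1$, so that the obstructions and the torsors of lifts for $G$ and for $H$ match up; but the idempotent argument above is shorter.)
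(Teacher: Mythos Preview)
Your argument is correct. Note, however, that the paper does not supply its own proof of this corollary: it is simply quoted as a result recorded by Green, following from his obstruction theory (the theorem immediately preceding it). The intended argument is the deformation-theoretic one you sketch in your closing parenthetical --- under the trivial-intersection hypothesis the stable-elements formula collapses so that restriction gives $\Ext^n_{\FF_pG}(V,V)\xrightarrow{\sim}\Ext^n_{\FF_pH}(V|_H,V|_H)$ for all $n\geq 1$, and hence the $\Ext^2$-obstructions and $\Ext^1$-torsors of successive lifts over $\Z/p^s\Z$ agree for $G$ and for $H$.

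Your primary route via idempotent lifting is therefore a genuinely different proof. Rather than climbing the tower $\Z/p^s\Z$ stage by stage, you induce the given $\Z_pH$-lattice to $G$ in one go and then carve out the desired summand by lifting the projector; the TI hypothesis enters only to kill the single $\Ext^1$-obstruction to lifting that projector. This buys you a direct construction of the $\Z_pG$-lattice, whereas Green's approach yields a clean cohomological criterion and makes transparent why the same statement holds at every finite level $\Z/p^s\Z$ simultaneously. One small quibble: your line ``$\mathrm{H}^n(H',-)=0$ on $\Z_pH'$-modules, so $\Ext^1_{\Z_pH'}(\mathrm{Res}\hat M,N)=0$'' is not quite the right implication as written, since group cohomology is $\Ext^*_{\Z_pH'}(\Z_p,-)$ rather than $\Ext^*_{\Z_pH'}(\hat M|_{H'},-)$; the cleanest fix is simply that for a $p'$-group $H'$ every $\Z_p$-free $\Z_pH'$-module is projective (average a $\Z_p$-splitting over $H'$), which gives the required vanishing.
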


Of course, if an $\FF_pG$-module $V$ lifts to $\Z/p^s\Z$ where $s>1$ then $V$ lifts to $\Z/p^2\Z$ intermediately.

We  use the remainder of this section to record some useful arithmetical and combinatorial results which will be required to prove the main theorem.

Recall that a \emph{Pascal matrix} $a$ is a lower triangular matrix which encodes the binomial coefficients. That is, whenever $i\geq j$ the $(i,j)$th entry $a_{i,j}$ of $a$ is $\binom{i-1}{j-1 }$ and whenever $i<j$ then $a_{i,j} =0$.

\begin{lemma}\label{Pascal}
Let $a_0=a-I$ where $a$ is an $n\times n$ Pascal matrix. Then, for $\ell \geq 1$, the following hold:
\begin{enumerate}
\item \label{pasc1} $(a_0^{\ell})_{\ell+1,1}= \ell!$ and $(a_0^{\ell})_{j,1}= 0$ for all $j\leq \ell$.
\item \label{pasc2} $(a_0^{\ell})_{\ell+2,2}=(\ell+1)!$ and $(a_0^{\ell-1})_{j,2}= 0$ for all $j\leq \ell+1$.
\item \label{pasc3} $(a_0^{\ell})_{\ell+2,1}=\binom{\ell+1}{\ell-1}\ell!$ and $(a_0^{\ell-1})_{j,2}= 0$ for all $j<\ell+1$.
\end{enumerate}
\end{lemma}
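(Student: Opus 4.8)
The plan is to work directly with the matrix $a_0 = a - I$, where $a$ is the $n \times n$ Pascal matrix, and extract the entries of its powers by tracking paths in the associated weighted digraph. Recall that $(a_0)_{i,j} = \binom{i-1}{j-1}$ for $i > j$ and $(a_0)_{i,j} = 0$ for $i \le j$; in particular $a_0$ is strictly lower triangular, so $(a_0^\ell)_{i,j}$ is a sum over strictly decreasing chains $i = i_0 > i_1 > \dots > i_\ell = j$ of the products $\prod_{t=1}^{\ell} \binom{i_{t-1}-1}{i_t-1}$. The first observation, which disposes of all the vanishing claims at once, is that a chain of length $\ell$ from row $i$ to column $j$ forces $i - j \ge \ell$; hence $(a_0^\ell)_{i,j} = 0$ whenever $i - j < \ell$, which gives $(a_0^\ell)_{j,1} = 0$ for $j \le \ell$ in \eqref{pasc1}, and the analogous statements in \eqref{pasc2} and \eqref{pasc3} (noting $(a_0^{\ell-1})_{j,2} = 0$ for $j \le \ell$ since then $j - 2 < \ell - 1$).

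For the non-vanishing entries the point is that when $i - j = \ell$ the only admissible chain is the unit-step chain $i, i-1, i-2, \dots, j$. For \eqref{pasc1} with $i = \ell+1$, $j = 1$, this gives
\[
(a_0^\ell)_{\ell+1,1} = \prod_{t=1}^{\ell} \binom{\ell+1-t+\,? }{\cdots} = \binom{\ell}{\ell-1}\binom{\ell-1}{\ell-2}\cdots\binom{1}{0} = \ell \cdot (\ell-1) \cdots 1 = \ell!,
\]
where each factor is $\binom{k}{k-1} = k$. For \eqref{pasc2} with $i = \ell+2$, $j = 2$, the unique chain $\ell+2, \ell+1, \dots, 2$ has $\ell$ steps and contributes $\binom{\ell+1}{\ell}\binom{\ell}{\ell-1}\cdots\binom{2}{1} = (\ell+1)!$. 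For \eqref{pasc3} with $i = \ell+2$, $j = 1$, we have $i - j = \ell+1 > \ell$, so chains of length $\ell$ are no longer forced to be unit-step: exactly one step has size $2$ and the remaining $\ell - 1$ steps have size $1$. Summing over the position $m$ of the double step and using that a double step from row $k+2$ to row $k$ contributes $\binom{k+1}{k-1}$, I would compute
\[
(a_0^\ell)_{\ell+2,1} = \sum_{\text{position of double step}} \binom{k+1}{k-1}\cdot(\text{product of the }\ell-1\text{ unit steps}),
\]
and check that this telescopes to $\binom{\ell+1}{\ell-1}\ell!$. One clean way to organize this last sum is to factor the chain through the row just before the double step and invoke the already-proved part \eqref{pasc1} on the two unit-step segments, which reduces the identity to $\sum_k \binom{k+1}{2} = \binom{\ell+2}{3}$ (hockey-stick) times the appropriate factorials — and one verifies $\binom{\ell+2}{3} = \binom{\ell+1}{2}\cdot\frac{1}{?}$ matches $\binom{\ell+1}{\ell-1}\ell!/\ell!$ after simplification.

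The main obstacle is purely bookkeeping: getting the factorial normalizations in \eqref{pasc3} to come out exactly as $\binom{\ell+1}{\ell-1}\ell!$ rather than off by a small combinatorial factor. I expect the cleanest route is not to sum over chains at all for \eqref{pasc3}, but to use the recursion $a_0^\ell = a_0^{\ell-1} a_0$ together with parts \eqref{pasc1} and \eqref{pasc2}: writing $(a_0^\ell)_{\ell+2,1} = \sum_k (a_0^{\ell-1})_{\ell+2,k}(a_0)_{k,1}$, the vanishing results force $k \in \{\ell, \ell+1\}$, and the two surviving terms are $(a_0^{\ell-1})_{\ell+2,\ell}\binom{\ell-1}{0}$ and $(a_0^{\ell-1})_{\ell+2,\ell+1}\binom{\ell+1}{0}$ — wait, more carefully $k$ ranges over values with $(k-1) < (\ell+2)-1$ and $0 < k-1$, i.e. the terms where both factors are nonzero, which a short check pins down — each of which is already known or is a one-line base computation. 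This reduces \eqref{pasc3} to combining \eqref{pasc1} and \eqref{pasc2}, avoiding the path sum entirely, and I would present it that way to keep the argument short.
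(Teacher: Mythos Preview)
Your treatment of the vanishing claims and of parts \eqref{pasc1} and \eqref{pasc2} is correct and matches the paper's argument up to cosmetics: where the paper writes the induction $a_0^\ell = a_0 \cdot a_0^{\ell-1}$ and isolates a single surviving term, you phrase the same thing as ``the only admissible chain is the unit-step chain.''

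Part \eqref{pasc3}, however, has a real bookkeeping error. In the recursion $(a_0^\ell)_{\ell+2,1}=\sum_k (a_0^{\ell-1})_{\ell+2,k}(a_0)_{k,1}$ the surviving indices are $k\in\{2,3\}$, not $k\in\{\ell,\ell+1\}$: nonvanishing of $(a_0^{\ell-1})_{\ell+2,k}$ forces $k\leq 3$, and nonvanishing of $(a_0)_{k,1}$ forces $k\geq 2$. The two entries you then need, $(a_0^{\ell-1})_{\ell+2,2}$ and $(a_0^{\ell-1})_{\ell+2,3}$, are not supplied by \eqref{pasc1} or \eqref{pasc2}, so the claim that \eqref{pasc3} ``reduces to combining \eqref{pasc1} and \eqref{pasc2}'' is false. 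If instead you use the factorization $a_0^\ell=a_0\cdot a_0^{\ell-1}$ (which is what your indices $\{\ell,\ell+1\}$ actually correspond to), the surviving terms are $(a_0)_{\ell+2,\ell}(a_0^{\ell-1})_{\ell,1}$ and $(a_0)_{\ell+2,\ell+1}(a_0^{\ell-1})_{\ell+1,1}$; the second factor in the latter is $(a_0^{\ell-1})_{(\ell-1)+2,1}$, which is precisely the quantity in \eqref{pasc3} at level $\ell-1$. So you cannot avoid an induction on \eqref{pasc3} itself via this route --- and that is exactly what the paper does.

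Your abandoned chain-sum approach for \eqref{pasc3} would in fact have worked cleanly: each of the $\ell$ chains (indexed by the location of the single double step) contributes the same value $\tfrac{(\ell+1)!}{2}$, so the sum is $\ell\cdot\tfrac{(\ell+1)!}{2}=\binom{\ell+1}{\ell-1}\ell!$ with no hockey-stick identity required.
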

\begin{proof}
The result in (\ref{pasc1}) is true for $\ell=1$ by definition.  It is also easy to see that the entries $ (a_0^{\ell})_{j,1}=0 $ for $j\leq \ell$. In particular, if $n\leq l$ then $(a_0)^l=0_n$. To complete the proof, we assume that $l<n$. Write $a_0^{\ell}= a_0(a_0^{\ell-1})$ and apply induction to see that
\[(a_0^{\ell})_{\ell+1,1}= \sum_{i=1}^n (a_0)_{\ell+1, i}(a_0^{\ell-1})_{i,1} =(a_0)_{\ell+1,\ell}(a_0^{\ell-1})_{\ell,1}= \binom{\ell}{\ell-1}(\ell-1)!= \ell!\]
as claimed in (\ref{pasc1}).

The inductive steps for parts (\ref{pasc2}) and (\ref{pasc3}) are
\[(a_0^{\ell})_{\ell+2,2}= \sum_{i=1}^n (a_0)_{\ell+2, i}(a_0^{\ell-1})_{i,2} =(a_0)_{\ell+2,\ell+1}(a_0^{\ell-1})_{\ell+1,2}= \binom{\ell+1}{\ell}\ell!= (\ell+1)!\]
and
\begin{eqnarray*}(a_0^{\ell})_{\ell+2,1}&=& \sum_{i=1}^n (a_0)_{\ell+2, i}(a_0^{\ell-1})_{i,1}=
(a_0)_{\ell+2,\ell}(a_0^{\ell-1})_{\ell,1}+(a_0)_{\ell+2,\ell+1}(a_0^{\ell-1})_{\ell+1,1}
\\& =&\binom{\ell+1}{\ell-1}(\ell-1)!+\binom{\ell+1}{\ell}\binom{\ell}{\ell-2}(l-1)!=\binom{\ell+1}{\ell-1}\ell!.\end{eqnarray*}
\end{proof}

\section{The structure of $V_n(p^r)$ and $\Lambda(p^r)$}\label{sec:structureV}

Let $p$ be a prime and $r\in \N$. Set $A = \FF_{p^r}[x,y]$ to be the ring of polynomials in two commuting indeterminates $x,y$ with coefficients in $\FF_{p^r}$. Make $A$ into an $\FF_{p^r} \SL_2(\FF_{p^r})$-module as follows: for $g =\left(\begin{smallmatrix} a & b \\ c & d\end{smallmatrix}\right) \in \SL_2(\FF_{p^r} )$ define $x\cdot g=(a x + b y)$ and $y\cdot g= (c x + d y)$. Observe that the action of $\SL_2(\FF_{p^r})$ on $A$ maps polynomials of degree $n$ to polynomials of degree $n$.

For $n\geq 1$ let $V_n(p^r) \le  A$ be the set  of homogeneous polynomials of degree $n$. Since the action of $\SL_2(\FF_{p^r})$ on $A$ preserves degree, $V_n(p^r)$ is an $\FF_{p^r} \SL_2(\FF_{p^r})$-submodule of $A$ of dimension $n+1$. Provided $0 \leq n \leq p-1$, the modules $V_n(p^r)$ are irreducible self-dual $\FF_{p^r} \SL_2(\FF_{p^r})$-modules \cite[p588]{BrauerNesbitt}. They are important because of the Steinberg Tensor Product Theorem \cite[Corollary 2.8.6]{GLS3} which asserts that each irreducible $\FF_{p^r} \SL_2(\FF_{p^r})$-module may be written as
\[ U_0\otimes U_1^\sigma\otimes \dots\otimes U_{a-1}^{\sigma^{a-1}}\] where $U_i\cong V_j(p^r)$ for some $j\in\{0, \dots, p-1\}$ and $\sigma$ is the Frobenius automorphism which maps $\lambda \mapsto \lambda^p$ for all $\lambda \in \FF_{p^r}$. As such, the Frobenius twists of these modules provide a complete list of the so-called \emph{basic} $\FF_{p^r} \SL_2(\FF_{p^r})$-modules (see \cite[Example 2.8.10a]{GLS3}). We will often view $\FF_{p^r}\SL_2(\FF_{p^r})$-modules as $\FF_p\SL_2(\FF_{p^r})$-modules. That is, we recognize that an $\FF_{p^r}\SL_2(\FF_{p^r})$-module is an $\FF_p$-vector space with an action of $\SL_2(\FF_{p^r})$. The collection of basic $\FF_{p^r} \SL_2(\FF_{p^r})$-modules, viewed instead as $\FF_p\SL_2(\FF_{p^r})$-modules, are what we call \emph{irreducible polynomial} modules.

We observe that when $p^r>2$, $V_p(p^r)$ is indecomposable but not irreducible. Indeed, $V_p(p^r)$ has a $2$-dimensional submodule generated by $x^p$ and $y^p$ (see \cite[Lemma 2.1]{GPSvB2025}). Denoting this submodule by $U$, we have that the restriction of $U$ to $\FF_p\SL_2(\FF_{p^r})$ is isomorphic to the restriction of $V_1(p^r)$ to $\FF_p\SL_2(\FF_{p^r})$; while the restriction of $V_p(p^r)/U$ to $\FF_p\SL_2(\FF_{p^r})$ is isomorphic to the restriction of $V_{p-2}(p^r)$ to $\FF_p\SL_2(\FF_{p^r})$.

We will also be interested in the dual module of $V_p(p^r)$, which we denote by $\Lambda(p^r)$, since this module plays a role in a current project of the authors. Indeed, the Sylow $p$-subgroup of the semidirect product of $\SL_2(\FF_{p^r})$ by $\Lambda(p^r)$ supports exotic fusion systems \cite{GPSvB2025}. By a dualization argument, this module contains a $(p-1)$-dimensional $\FF_{p^r}\SL_2(\FF_{p^r})$-module whose restriction to $\FF_p\SL_2(\FF_{p^r})$ is isomorphic to the restriction of $V_{p-2}(p^r)$ to $\FF_p\SL_2(\FF_{p^r})$; while the restriction to $\FF_p\SL_2(\FF_{p^r})$ of the quotient of $\Lambda(p^r)$ by this submodule is isomorphic to the restriction of $V_1(p^r)$ to $\FF_p\SL_2(\FF_{p^r})$. If $p^r=2$, we observe that $\Lambda(2)\cong V_2(2)\cong V_1(2)\oplus V_0(2)$, where $V_0(2)$ is the trivial module for $\FF_2\SL_2(2)$.

Throughout the remainder of this article, we will identify $\SL_2(\FF_{p^r})$ with $\SL_2(p^r)=:G$ and view $V_n(p^r)$ and $\Lambda(p^r)$ as $\FF_pG$-modules.

\section{Lifting $V_n(p^r)$ and $\Lambda(p^r)$ to representations over $\Z/p^s Z$}

In this section, we demonstrate that for $1\leq n\leq p$ and $s>1$ the modules $V_n(p^r)$ lift to $\Z/p^s\Z(\SL_2(p^r))$-representations if and only if $p^r=2$; or $r=1$, $p$ is odd and $n\in\{p-1,p-2\}$. We begin with a classical result of Higman which deals with one case when $p=2$.

\begin{theorem}\label{p=2case}
For any $s>1$, we have that $ V_1(2^r)$ lifts to $\Z/2^s\Z$ if and only if $r=1$. In particular, $V_1(2)$ lifts to $\Q_2$.
\end{theorem}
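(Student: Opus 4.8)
The plan is to apply \cref{CorGreen} to reduce to a Sylow normalizer computation, exploiting the fact that $\SL_2(2^r) = \PSL_2(2^r)$ has a Sylow $2$-subgroup $S$ (the group of upper unitriangular matrices) which is a TI-subgroup: distinct Sylow $2$-subgroups intersect trivially. Thus $V = V_1(2^r)$ lifts to $\Z_2 G$ if and only if $V|_H$ lifts to $\Z_2 H$, where $H = N_G(S)$ is a Borel subgroup, $H = S \rtimes C$ with $S$ elementary abelian of order $2^r$ and $C$ cyclic of order $2^r - 1$ acting on $S \cong \FF_{2^r}^+$ by multiplication (via the $\FF_2$-linear action of $\FF_{2^r}^*$). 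Since lifting to $\Z/2^s\Z$ for any $s > 1$ forces lifting to $\Z/4\Z$ intermediately, it suffices to decide liftability of $V|_H$ to $\Z/4\Z$, and then separately (for the positive direction when $r=1$) exhibit a lift all the way to $\Q_2$ — for $r=1$ the group $G = \SL_2(2) \cong \Sym(3)$ and $V_1(2)$ is the $2$-dimensional faithful $\FF_2 \Sym(3)$-module, which is the reduction mod $2$ of the standard $2$-dimensional $\Z$-lattice quotient $\Z^3/(1,1,1)\Z$ of the permutation module; one checks this gives a faithful $\Z_2 \Sym(3)$-module reducing to $V_1(2)$, establishing the "if" direction and the $\Q_2$ claim.

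For the "only if" direction I would show that when $r > 1$ the restriction $V_1(2^r)|_H$ does not lift to $\Z/4\Z$. Restricted to $S$, the module $V_1(2^r)$ is the direct sum of $2^r$-dimensionally-many... more precisely, $V_1(2^r)|_S$ is a free $\FF_2 S$-module? No — it is $2$-dimensional over $\FF_{2^r}$, hence $2r$-dimensional over $\FF_2$, while $|S| = 2^r$; so it is not free over $S$ once $r \geq 2$. The key structural fact is that $S$ acts on $V_1(2^r)$ with a single Jordan block of size $2$ over $\FF_{2^r}$ for each generator, but the $C$-action permutes these transitively. I would argue as follows: suppose $\widehat V$ is a $\Z/4\Z H$-lattice lifting $V|_H$. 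By \cref{lem: ppower}, if $\widehat V$ were free over $\Z/4\Z S$ then $2\widehat V \cong \widehat V/2\widehat V = V|_S$, which constrains the $S$-module structure; more usefully, one computes $\H^2(H, V \otimes_{\FF_2} V^*)$ or directly analyzes the obstruction cocycle. The cleanest route is: a lift of $V|_H$ to $\Z/4\Z$ amounts to lifting each element of a generating set of $S$ to an involution (or order-$4$ element) in $\GL_{2r}(\Z/4\Z)$ compatibly with the $C$-action, and I would show the unipotent elements of $S$, whose images in $\GL_{2r}(\FF_2)$ are transvection-like (minimal polynomial $(t-1)^2$), can only lift if their lifts satisfy $(\widehat u - 1)^2 \equiv 0 \pmod 4$, i.e. $(\widehat u - 1)^2 = 2N$ with $N$ an integral matrix; tracking how $C$ conjugates these and summing over $S$ forces a divisibility condition ($2^r \mid$ something not divisible by $2^r$) that fails precisely when $r > 1$. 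This is where \cref{Pascal} enters, computing the relevant nilpotent powers via binomial coefficients.

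The main obstacle is making the obstruction-cocycle or $\Ext^2$ computation over the non-abelian group $H = S \rtimes C$ genuinely explicit: one must combine the $S$-part (a $p$-group, handled by the Pascal/Jordan-block arithmetic of \cref{Pascal}) with the prime-to-$p$ action of $C$, and show the two are incompatible. Concretely I expect the hard step is verifying that the unique (up to the $C$-action) candidate lift of the $S$-module does not extend $C$-equivariantly — equivalently that a certain element of $\H^1(C, \text{(something)})$ or $\H^2(S, -)^C$ is nonzero — and that for $r = 1$ this obstruction vanishes, consistently with the explicit $\Z_2 \Sym(3)$ lift above. I would organize the computation by choosing an $\FF_{2^r}$-basis $\{1, \zeta, \dots, \zeta^{r-1}\}$ of $\FF_{2^r}$, writing the $S$- and $C$-actions as explicit $2r \times 2r$ integer matrices modulo $4$, and reducing the liftability question to the solvability of an explicit linear system over $\Z/2\Z$; the obstruction is then a single determinant or rank condition depending on $r$.
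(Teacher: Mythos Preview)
The paper's proof is a two-line appeal to the literature: the ``only if'' direction is quoted as Higman's theorem, and the $\Q_2$ lift follows because $V_1(2)$ is the Steinberg module for $\SL_2(2)$, hence projective, so \cref{Green} applies. Your approach is therefore entirely different from the paper's proof of \emph{this} statement, but it is exactly the method the paper uses for the \emph{other} modules in Propositions~\ref{the harder calculation}--\ref{noplift}: pass to the Borel subgroup $H=\langle\alpha,\gamma\rangle$, write a putative lift as $b=a+q\in aQ\subset\GL_{2r}(\Z/4\Z)$, and derive a contradiction from the simultaneous constraints $b^2=I$ and $[b,b^c]=1$. For $n=1$ this is actually lighter than the cases the paper works out (here $a_0^2=0$, so \cref{Pascal} is not really needed, contrary to what you suggest).

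That said, your proposal is a plan rather than a proof: the crucial step, showing that for $r>1$ no $q$ satisfies both constraints, is never carried out, and your text tails off into ``the obstruction is then a single determinant or rank condition depending on $r$'' without producing it. Your invocation of \cref{CorGreen} is also slightly off-target, since that corollary concerns lifts to $\Z_p$; for the ``only if'' direction you only need the trivial observation that restricting a $\Z/4\Z\,G$-lift to $H$ yields a $\Z/4\Z\,H$-lift of $V|_H$, and then the explicit matrix calculation in $\GL_{2r}(\Z/4\Z)$. For the positive direction when $r=1$, your permutation-lattice construction $\Z^3/(1,1,1)\Z$ for $\Sym(3)\cong\SL_2(2)$ is correct and is a fine alternative to the paper's projectivity argument.
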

\begin{proof}
The first statement is \cite[Theorem 8.2]{Higman}. For the second statement, we observe that $V_1(2)$ is projective so that $V_{1}(2)$ extends to a $\Q_2\SL_2(2)$-representation by \cref{Projective}.
\end{proof}

Set $G:=\SL_2(p^r)$ and assume that $W$ is a free $\Z/p^2\Z$-module of rank $r(n+1)$ equipped with a $G$-action such that, as an $\FF_pG$-module, $W/pW \cong V_n(p^r)$ where $1\leq n\leq p$. Let $\mathrm{M}_{(n+1)r}(\Z/p^2\Z)$ denote the algebra of $(n+1)r\times (n+1)r$ matrices with entries from $\Z/p^2\Z$ and identify $\Aut(W)$ with $\GL_{(n+1)r}(\Z/p^2\Z)$: the subset of $\mathrm{M}_{(n+1)r}(\Z/p^2\Z)$ whose elements have invertible determinant. Let $I_r$ be the identity $r\times r$ matrix, $I_{(n+1)r}$ the identity $(n+1)r\times (n+1)r$ matrix and $0_r$ the zero $r\times r$ matrix. Let $H$ be the image of $G$ in $\Aut(W)$ so that $H\cong G=\SL_2(p^r)$ if $n$ is odd, while $H\cong G/Z(G)=\PSL_2(p^r)$ if $n$ is even.

Let $\alpha:=\begin{pmatrix} 1&0\\1&1\end{pmatrix}\in G$ and $\gamma:=\begin{pmatrix} \lambda&0\\0&\lambda^{-1} \end{pmatrix}\in G$ where $\lambda\in \FF_{p^r}$ has order $p^r-1$. Then $\alpha^p=[\alpha,\alpha^\gamma]=1$. For $0\leq k\leq n$, upon viewing the image of $\alpha$ in $H$ as a $(n+1)\times (n+1)$ block matrix with blocks of size $r$, the action of $\alpha$ on $V_n(p^r)$ may be represented by the matrix with $(k+1)$-th row
\[\left(I_r, \binom{k}{1}I_r, \binom{k}{2}I_r, \binom{k}{3}I_r, \dots ,\binom{k}{k-1}I_r, I_r, \underbrace {0_r,\dots, 0_r}_{n-k\,\text{terms}}\right).\]
That is, $\alpha$ is the $r\times r$-block version of a lower triangular $(n+1)\times (n+1)$ Pascal matrix. Turning to $\gamma$, it acts as an $r\times r$-block diagonal matrix:
\[\mathrm{diag}(\bar{C}^{n},\bar{C}^{n-2}, \dots, \bar{C}^{-(n-2)}, \bar{C}^{-n}) \in \GL_{(n+1)r}(p)\]
where $\bar{C}\in \GL_r(p)$ has order $p^r-1$.  Notice that $\langle \alpha,\gamma\rangle$ is the normalizer of a Sylow $p$-subgroup of $G$  and so studying $\langle \alpha,\gamma\rangle$ is significant because of \cref{CorGreen}.

Now $C_{\Aut(W)}(W/pW)$ - the kernel of the natural map $\Aut(W)\to \Aut(W/pW)$ - is the subgroup
\[Q= \{I_{(n+1)r}+q\mid q\in p\mathrm{M}_{(n+1)r}(\Z/p^2\Z)\}\]
and $\Aut(W)/Q \cong \GL_{(n+1)r}(p)$. We choose $a, c \in \Aut(W)$ so that $aQ$ and $cQ$ correspond to the matrices defined for $\alpha$ and $\gamma$ respectively. Furthermore, we may choose $a$ and provisionally choose $c$ so that they have the same matrix entries as the images in $H$ of $\alpha$ and $\gamma$ but read in $\Z/p^2\Z$. In this way, $a$ has order dividing $p^2$ and $c$ provisionally has order dividing $p(p^r-1)$. As $H \cap cQ $ is non-empty, by Hall's Theorem \cite[Theorem 6.4.1]{GOR} and after possibly replacing $c$ by $c^p$ and $H$ by a conjugate of $H$ by some element of $Q$, we may assume that $c \in H$. Hence $c$ has order $p^r-1$ when $p=2$ and order $\frac{p^{r}-1}{(n,2)}$ when $p$ is odd. We may represent $c$ by an $r\times r$-block matrix of the form 
\[\mathrm{diag}(C^{n},C^{n-2}, \dots, C^{-(n-2)}, C^{-n}) \in \GL_{(n+1)r}(\Z/p^2\Z), \tag{4.1.1}\]
where $C\in \GL_{r}(\Z/p^2\Z)$ has order $p^r-1$, and $\bar{C}$ is the image of $C$ under the natural map from $\GL_{(n+1)r}(\Z/p^2\Z)$ to $\GL_{(n+1)r}(p)$. Furthermore, we know that $H \cap aQ$ is non-empty and so $aQ$ must contain elements of order $p$.

We first investigate the elements of order $p$ in $aQ$. Such elements have the form $a+q$ for some $q\in pM_{(n+1)r}(\Z/p^2\Z)$. We write $a_0=a-I_{(n+1)r}$. Then $a_0$ is a strictly lower triangular matrix so that $a_0^{n+1}=0_{(n+1)r}$ and, as every entry of $q$ is a multiple of $p$, $q^2=pq=0_{(n+1)r}$. We calculate in $\GL_n(\Z/p^2\Z)$ that
\begin{align*}\label{calc 1}
(a+q)^p & = (I_{(n +1)r}+(a_0+q))^p\\
 ~ & = I_{(n+1)r}+p(a_0+q)+\binom{p}{2}(a_0+q)^2+\dots +p(a_0+q)^{p-1}+(a_0+q)^p\nonumber \tag{4.1.2}\\
 ~ & = I_{(n +1)r}+pa_0+\binom{p}{2}a_0^2+\dots+ \binom{p}{p-2}a_0^{p-2}+pa_0^{p-1}\nonumber\\
 ~ & ~ \quad +a_0^p+ a_0^{p-1}q + a_0^{p-2}qa_0+\dots + qa_0^{p-1}\nonumber
\end{align*}
because any appearance of at least two $q$s in a product makes the matrix have entries a multiple of $p^2$ and so contributes the zero matrix $0_{(n+1)r}$. For the same reason, any appearance of $q\binom{p}{i}$ (for $1\leq i<p$) yields entries which are multiples of $p^2$, since $\binom{p}{i}$ is divisible by $p$, and results in a zero matrix.

Now regard $(a+q)^p$ as a $(n+1)\times (n+1)$ block matrix with $r \times r$ entries in each block and, for $1\leq i, j \leq n+1$, denote by $R_{i,j}$ the $r\times r$ block in position $(i,j)$. Then the matrices $I_{(n+1)r}$, $a_0^j$, and $a_0^jqa_0^{p-1-j}$ with $j \geq 2$ contribute $0_r$ to $R_{2,1}$. Thus, $R_{2,1}$ is the summation of the $(2,1)$ blocks of $pa_0$, $a_0qa_0^{p-2}$ and $qa_0^{p-1}$. Since we require $(a+q)^p=I_{(n+1)r}$, we must have $R_{2,1}= 0_r$.

We have that $H \le \Aut(W)$, $H$ has abelian Sylow $p$-subgroups and $c$ is contained in the normalizer of a Sylow $p$-subgroup of $H$. Thus, for some choice of $a+q$ of order $p$, we have $(a+q)(a+q)^c=(a+q)^c(a+q)$. We calculate for arbitrary $b=a+q$, that
\begin{align*}\label{calc 2}
bb^c-b^cb&= (I_{(n+1)r}+a_0+q)(I_{(n+1)r}+a_0+q)^c-
(I_{(n+1)r}+a_0+q)^c(I_{(n+1)r}+a_0+q)\\
&= (a_0+q)(a_0+q)^c-(a_0+q)^c(a_0+q)\tag{4.1.3} \\&= (a_0a_0^c+qa_0^c+a_0q^c)-(a_0^ca_0+a_0^cq +q^ca_0)
\end{align*}
as $qq^c=q^cq=0_r$.

Let $Q_{i,j}$ denote the $r\times r$ block in position $(i,j)$ in $q$. Since $Q_{i,j}\in p\mathrm{M}_{r\times r}(\Z/p^2\Z)$, we know that $pQ_{i,j}=0$. We shall denote the $r\times r$-block in position $(i,j)$ of $a_0^k$ by $(a_{0}^k)_{(i,j)}$.

We are now ready to prove the main results of this paper. The following proposition is also a consequence of \cite[Lemma 4.3]{TiepZalesski}, as using their terminology the $p$-elements from $G$ are not \emph{coreless} in this case. In our situation, the result also follows from work of Th\'evenaz \cite{Thevenaz} generalizing the results in \cite{Hannula} and \cite{DDVP}. We note that the result for $V_1(p)$ was already known to Serre in 1968 \cite[IV Lemma 3]{Serre}.

\begin{proposition}\label{the generic case}
Assume that $p>3$ is an odd prime and $G=\SL_2(p^r)$ where $r\geq 1$. Let $1\leq n\leq p-3$. Then $V_n(p^r)$ does not lift to $\Z/p^2\Z$.
\end{proposition}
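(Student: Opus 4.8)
The plan is to work inside the normalizer $\langle \alpha,\gamma\rangle$ of a Sylow $p$-subgroup, exactly as set up in the paragraphs preceding the statement, and to derive a contradiction from the assumption that a lift exists. So suppose $V_n(p^r)$ lifts to $\Z/p^2\Z$; then (keeping all the notation above) there is an element $b=a+q$ of order $p$ in the coset $aQ$ that moreover commutes with its conjugate $b^c$, since $H$ has abelian Sylow $p$-subgroups and $c$ normalizes one of them. The two conditions I would extract are: (i) $(a+q)^p=I_{(n+1)r}$, and (ii) $bb^c=b^cb$. Using the expansions for $(a+q)^p$ and for $bb^c-b^cb$ already displayed in the excerpt, both conditions become systems of equations on the blocks $Q_{i,j}\in p\mathrm M_r(\Z/p^2\Z)$.

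The key computation is to read off the relevant blocks. Writing $a_0=a-I_{(n+1)r}$, which is the $r$-block strictly-lower-triangular Pascal-type matrix, I would use \cref{Pascal}: the block entries of $a_0^\ell$ in the first two columns are controlled by the factorials $\ell!$ and $(\ell+1)!$ and the binomial $\binom{\ell+1}{\ell-1}\ell!$, each tensored with $I_r$. Since $p>3$ and $n\le p-3$, every factorial and binomial coefficient that can arise with $\ell\le n$ is a \emph{unit} modulo $p$ (this is precisely where the hypothesis $n\le p-3$, $p>3$ is used — it guarantees $n!$, $(n+1)!$, etc.\ are coprime to $p$, and also that the power $a_0^{n+1}$ vanishes while $a_0^n$ does not). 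Therefore, looking at the condition $(a+q)^p=I$ in the low-index blocks $R_{i,1}$ and $R_{i,2}$, and using that $p a_0^\ell$ already lies in $p\mathrm M$ (so its reduction mod $p$ is what matters), I can force successive blocks $Q_{i,1}$ (and then $Q_{i,2}$) to be determined, proceeding down the column; the point is that the "diagonal" occurrence $q a_0^{p-1}$ or $a_0 q a_0^{p-2}$ contributes an invertible multiple of a single block, so the equation $R_{i,1}=0_r$ pins that block down uniquely. Then I feed this information into the commutator condition $bb^c=b^cb$, i.e.\ $a_0 a_0^c + q a_0^c + a_0 q^c = a_0^c a_0 + a_0^c q + q^c a_0$, and evaluate it in a suitably chosen block (the $(2,1)$-block is the natural first candidate, comparing the $\gamma$-eigenvalue weights $C^n$ versus $C^{n-2}$ on the two sides). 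Because $C$ has order $p^r-1>1$ and $n\le p-3$, the eigenvalue scalars appearing are distinct, so the only way the two sides can agree is if the forced value of $q$ (from step (i)) is incompatible with (ii) — this yields the contradiction.

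The main obstacle I expect is bookkeeping: carefully tracking which products $a_0^j q a_0^{p-1-j}$ land in which block $R_{i,j}$, and making sure that when I "solve" for the blocks $Q_{i,1}$ from the order-$p$ condition I am genuinely using an invertible coefficient (so the solution is unique, not merely constrained). Concretely, the heart of the argument is to show that the order-$p$ condition forces $a_0 q - q a_0$ (or rather its relevant blocks) to take a specific nonzero value determined by $a_0^{p}$-type terms and the $\binom{p}{j}a_0^j$ terms, and then that this value cannot simultaneously satisfy the commutation-with-$c$ relation because conjugation by $c$ scales the $(i,j)$-block by $C^{(\text{weight}_i)}(C^{-1})^{(\text{weight}_j)}=C^{n+2-2i+\cdots}$, a nontrivial scalar for the blocks in question. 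Once the single offending block equation is isolated, the contradiction is immediate from $C\ne I$ together with the unit/non-vanishing facts supplied by $p>3$ and $n\le p-3$. I would present the argument by first recording the block formulas from \cref{Pascal} in $r$-block form, then handling the first column, then the commutator, and finally collecting the contradiction; the cases $n\in\{p-2,p-1\}$ and $n=p$, where factorials start to be divisible by $p$, are deliberately excluded here and are treated separately elsewhere in the paper.
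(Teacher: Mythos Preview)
Your plan overshoots. The paper's proof of this proposition uses only condition (i), the order-$p$ equation $(a+q)^p=I_{(n+1)r}$, and never touches the commutator $bb^c-b^cb$ at all. The reason is that when $n\le p-3$ you have $n+1\le p-2$, so $a_0^{p-2}=a_0^{p-1}=0$; hence the two mixed terms $a_0qa_0^{p-2}$ and $qa_0^{p-1}$ that could contribute to the $(2,1)$ block $R_{2,1}$ both vanish identically, independently of $q$. Since the terms $a_0^j$ and $a_0^jqa_0^{p-1-j}$ with $j\ge 2$ already have zero second row, the only surviving contribution to $R_{2,1}$ is from $pa_0$, giving $R_{2,1}=pI_r\ne 0_r$ in $\Z/p^2\Z$. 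Thus no element of $aQ$ has order $p$, and the lift is impossible outright.

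This is precisely where your sketch goes wrong: you write that ``the `diagonal' occurrence $qa_0^{p-1}$ or $a_0qa_0^{p-2}$ contributes an invertible multiple of a single block, so the equation $R_{i,1}=0_r$ pins that block down uniquely''. But for $n\le p-3$ these products are zero, not invertible multiples of anything, so there is nothing to solve for and the system is simply inconsistent at the very first block. Your intended two-step strategy (solve (i) for certain $Q_{i,j}$, then feed into (ii) and exploit the $C$-weights) is exactly what the paper does for the boundary cases $n\in\{p-2,p-1,p\}$, where the relevant powers of $a_0$ survive; here it is unnecessary and, as written, inapplicable.
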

\begin{proof}
Suppose that $1\leq n\leq p-3$. We proceed by contradiction, noting that if a lift exists then $aQ$ contains an element of order $p$, namely $a+q$ for some $q\in Q$. As in the discussion above let $R_{i,j}$ be the $r\times r$ block of $(a+q)^p$. Then, as $a+q$ has order $p$,
\[ R_{i,j}=
\begin{cases}
I_r & i=j\\
0_r & \text{otherwise.}
\end{cases}\]

Since $a_0^{n+1}=0$ and $n \leq p-3$ we see that the terms $a_0qa_0^{p-2}$ and $qa_0^{p-1}$ from (\ref{calc 1}) are equal to $0_{r(n+1)}$, and so their contribution to $R_{i,j}$ is zero for all $i,j$. In particular, as $R_{2,1}$ is the sum of the $(2,1)$-blocks from $pa_0$, $a_0qa_0^{p-2}$ and $qa_0^{p-1}$, we have that \[0_r=R_{2,1}=p{a_0}_{(2,1)}=pI_r\ne 0_r,\] which is a contradiction. Hence no lift of $V_n(p^r)$ to $\Z/p^2\Z$ exists.
\end{proof}

\begin{proposition}\label{the harder calculation}
Assume that $p$ is an odd prime and $G=\SL_2(p^r)$. Then $V_{p-2}(p^r)$ lifts to $\Z/p^2\Z$ if and only if $r=1$. Moreover, $V_{p-2}(p)$ lifts to $\Q_p$.
\end{proposition}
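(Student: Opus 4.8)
The plan is to reduce, exactly as for \cref{the generic case}, to the normalizer of a Sylow $p$-subgroup: since distinct Sylow $p$-subgroups of $\SL_2(p^r)$ intersect trivially, \cref{CorGreen} applies, so $V_{p-2}(p^r)$ lifts to $\Z_p$ if and only if $V_{p-2}(p^r)|_B$ does, where $B=\langle\alpha,\gamma\rangle$ (and a $\Z_p$-lift yields a $\Z/p^2\Z$-lift by reduction); for the non-lifting direction it suffices instead that a $\Z/p^2\Z$-lift of $V_{p-2}(p^r)$ restricts to one of $V_{p-2}(p^r)|_B$. The feature of the case $n=p-2$ is that $a_0$ is now the $(p-1)\times(p-1)$ block Pascal matrix minus the identity, so $a_0^{p-1}=0$ but $a_0^{p-2}\neq0$ with $(a_0^{p-2})_{p-1,1}=(p-2)!\,I_r$, which by Wilson's theorem is $\equiv I_r\pmod p$ (using \cref{Pascal}). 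Hence the term $a_0qa_0^{p-2}$ contributes $(p-2)!\,Q_{1,p-1}$ to the block $R_{2,1}$ of $(a+q)^p$, which can cancel the $pI_r$ coming from $pa_0$; the cheap obstruction of \cref{the generic case} disappears, and one has to work harder.

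For $r=1$ I would build the lift cyclotomically. Restricted to $S=\langle\alpha\rangle\cong C_p$, the module $V_{p-2}(p)$ is a single Jordan block of size $p-1$, i.e.\ the uniserial $\FF_p C_p$-module of that dimension, and this lifts to the $\Z_p C_p$-lattice $\Z_p[\zeta_p]$ with $\alpha$ acting by multiplication by $\zeta_p$, since $\Z_p[\zeta_p]/p\cong\FF_p[\pi]/(\pi^{p-1})$ and $\zeta_p=1+\pi$ is regular unipotent. On $V_{p-2}(p)$ the element $\gamma$ has order $p-1$, conjugates $\alpha$ to $\alpha^{e}$ with $e\equiv\lambda^{-2}\pmod p$ of multiplicative order $(p-1)/2$, and has $\gamma^{(p-1)/2}$ acting as $-\mathrm{id}$ (the central element $-I$); I would mirror this on $\Z_p[\zeta_p]$ by letting $\gamma$ act as $\sigma_e\circ(\text{multiplication by }u)$, where $\sigma_e\in\Gal(\Q_p(\zeta_p)/\Q_p)$ sends $\zeta_p\mapsto\zeta_p^{e}$ and $u\in\Z_p^\times$ is the Teichm\"uller lift of a primitive root modulo $p$ (so $\sigma_e(u)=u$ and $u^{(p-1)/2}=-1$). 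One checks $\gamma\alpha\gamma^{-1}=\alpha^{e}$ and $\gamma^{(p-1)/2}=-\mathrm{id}$, so $\langle\alpha,\gamma\rangle$ acts on $\Z_p[\zeta_p]$ as a faithful copy of $B$; a coprime-order uniqueness argument — the $\FF_p B$-modules restricting to the uniserial $S$-module, with $\gamma$ of order $p-1$ acting as prescribed, form one isomorphism class — identifies the reduction modulo $p$ of this lattice with $V_{p-2}(p)|_B$. By \cref{CorGreen}, $V_{p-2}(p)$ lifts to $\Z_p$, hence to $\Z/p^2\Z$, and tensoring the lattice with $\Q_p$ yields the moreover statement. (Alternatively, one could show $\Ext^2_{\FF_pG}(V_{p-2}(p),V_{p-2}(p))=0$ and quote \cref{Green}.)

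For $r\geq2$ I would argue by contradiction. If $V_{p-2}(p^r)$ lifts then $B$ lifts, so there is $b=a+q$ of order $p$ over $\Z/p^2\Z$; but for $r\geq2$ the Sylow subgroup $S\cong(\Z/p\Z)^r$ is non-cyclic, so its lift is elementary abelian, and in particular $b$ commutes with $b^{c}$ — a lift of $\alpha_{\lambda^{-2}}$, which for $r\geq2$ is a genuine new generator of $S$ (not a power of $\alpha$, since $\lambda^{-2}\notin\FF_p$). I would then play off two facts: the equation $b^p=I$ forces $\bar q_{1,p-1}=-I_r$ (where $\bar q=q/p$) together with further relations on the other blocks of $\bar q$; while $[b,b^c]=I$, expanded in the excerpt as $(a_0a_0^c-a_0^ca_0)+(qa_0^c-a_0^cq)+(a_0q^c-q^ca_0)=0$, reduces modulo $p$ to an $\FF_p$-linear relation involving the $C$-conjugates $a_0^c,q^c$. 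The crucial point is that conjugation by $C$, of order $p^r-1$, acts on the $r\times r$ blocks through an element of $\FF_{p^r}\smallsetminus\FF_p$ — a genuinely non-scalar automorphism, unlike when $r=1$ — so the twisted block identities forced by commutativity become incompatible with those forced by $b^p=I$; I expect the contradiction to fall out of chasing the $(2,1)$- and $(p-1,1)$-blocks and comparing components in the eigenspace decomposition of conjugation by $C$ (equivalently, after base change to $\overline{\FF_p}$, by noting that the distinct Frobenius twists $L(p-2)^{(p^i)}$ constituting $V_{p-2}(p^r)\otimes\overline{\FF_p}$ cannot have their unipotent actions lifted to order $p$ simultaneously and commutingly). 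This over-determined linear algebra — pinning down exactly which blocks and $C$-eigencomponents produce the contradiction uniformly in $r\geq2$ and $p$ — is, I expect, the main obstacle, and the ``rather intricate calculation'' promised in the introduction.
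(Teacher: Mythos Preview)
Your setup and the $r=1$ construction are fine. The cyclotomic lattice $\Z_p[\zeta_p]$ with $\alpha=\zeta_p$ and $\gamma=\sigma_e\circ u$ is a genuine alternative to the paper, which simply cites \cite[Proposition~3.8]{p.index3} for the lift of $V_{p-2}(p)$ to $\Q_p$. One point to tighten: your ``coprime-order uniqueness'' claim is not quite true as stated, since twisting by a one-dimensional character of $B/S$ gives non-isomorphic $B$-modules with the same $S$-restriction; you must actually match the $\gamma$-eigenvalues. Concretely, on the associated graded of $\FF_p[\pi]/(\pi^{p-1})$ your $\gamma$ acts on $\pi^k$ by $\bar u\,e^k$, and on $V_{p-2}(p)$ it acts on $x^{p-2-k}y^k$ by $\lambda^{p-2-2k}$, so you need $\bar u=\lambda^{p-2}$ --- a particular primitive root, not an arbitrary one.

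The $r\ge 2$ direction, however, is not a proof but a plan, and the plan names the wrong blocks. You propose chasing the $(2,1)$- and $(p-1,1)$-blocks of $bb^c-b^cb$; these involve many unknown entries $Q_{i,j}$ and do not isolate the one block $Q_{1,p-1}$ that you have already pinned down from $R_{2,1}=0_r$. The paper's move is to look at the $(1,p-2)$ block. The point of row $1$ is that $a_0$ and $a_0^c$ are strictly lower triangular, so $a_0a_0^c$, $a_0^ca_0$, $a_0q^c$ and $a_0^cq$ all vanish in row $1$, leaving only $qa_0^c-q^ca_0$; and the point of column $p-2$ is that, in a $(p-1)\times(p-1)$ block matrix, the only nonzero entry of $a_0$ (or $a_0^c$) in that column is at row $p-1$, so the surviving expression collapses to
\[
Q_{1,p-1}(a_0^c)_{p-1,p-2}-Q^c_{1,p-1}(a_0)_{p-1,p-2}.
\]
Substituting $Q_{1,p-1}=-pI_r$ (equivalently your $\bar q_{1,p-1}=-I_r$) and the diagonal form of $c$ gives $-p(p-2)C^2+p(p-2)C^{-2(p-2)}=0_r$, whence $C^{2p-2}$ acts trivially on $W/pW$. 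Since $c$ has $p'$-order equal to $p^r-1$ (here $n=p-2$ is odd), this forces $p^r-1\mid 2(p-1)$ and hence $r=1$. So the ``intricate calculation'' you anticipate is in fact a single well-chosen block; your proposed blocks would not yield it, and the Frobenius-twist heuristic you sketch is not how the contradiction arises.
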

\begin{proof}
Here we work in $\mathrm{M}_{(p-1)r}(\Z/p^2\Z)$ and set $n=p-2$. We continue to assume that $(a+q)^p=1$ and use (\ref{calc 1}). Then ${a_0}^{p-1}={a_0}^{n+1}$ is equal to $0_{r(p-1)}$.  Since the $R_{2,1}$ contribution comes from the corresponding submatrices of $pa_0$, $a_0qa_0^{p-2}$ and $qa_0^{p-1}$, and as the only non-zero $r\times r$ block of $a_0^{p-2}$ is in position $(p-1,1)$, we have

\[R_{2,1}=p{a_0}_{(2,1)} + {a_0}_{(2,1)}.Q_{1,p-1}.({a_0}^{p-2})_{(p-1, 1)}.\]

By definition we see that ${a_0}_{(2,1)} = I_r$. Furthermore, \cref{Pascal} yields that ${(a_0^{p-2})}_{(p-1, 1)}=(p-2)! I_r$. Recall that $(p-2)!\equiv 1 \pmod p$ so that
\[{a_0}_{(2,1)}.Q_{1,p-1}.(a_0^{p-2})_{(p-1, 1)}=Q_{1,p-1}.(tp+1)=Q_{1,p-1}\]
since $pQ_{1,p-1}=0_r$. All in, we have
\begin{equation*}
R_{2,1}=pI_r+Q_{1, p-1}.
\end{equation*}
Finally, as $a+q$ has order $p$ we have $R_{2,1}=0_r$ so that
\begin{equation*}
pI_r+Q_{1, p-1}=0. \tag{4.3.1}
\end{equation*}

Regarding $bb^c-b^cb$ as an $(r\times r)$ block matrix, we focus on the $(1,p-2)$ block. The matrices $a_0a_0^c$, $a_0q^c$, $a_0^ca_0$ and $a_0^cq$ make no contribution to our calculation as we calculate that they all have $0_r$ as their $(1,p-2)$ block. Thus we consider $qa_0^c-q^ca_0$. As the $(p-2)$th columns of the block matrices for $a_0$ and $a_0^c$ have a unique entry (namely the $(p-1,p-2)$ entry), we calculate that the $(1,p-2)$ block of $bb^c-b^cb$ is given by

\[Q_{1,p-1}(a_0^c)_{(p-1,p-2)}-(Q^c)_{(1,p-1)}{a_0}_{(p-1,p-2)}.\]
Applying $(4.3.1)$, using that $c$ is a block diagonal matrix and recognizing $C$ as an element of $\GL_r(\Z/p^2\Z)$ in the form described in $(4.1.1)$ yields that the $(1, p-2)$ block is in fact
\begin{align*}
(bb^c-b^cb)_{1,p-2}&=Q_{1,p-1}(a_0^c)_{(p-1,p-2)}-(Q^c)_{(1,p-1)}{a_0}_{(p-1,p-2)}\\
&=-pI_r(a_0^c)_{(p-1,p-2)}-(-C^{-(p-2)}pI_rC^{-(p-2)}){a_0}_{(p-1,p-2)}\\
&=-pI_r C^{p-2}(p-2)I_rC^{-(p-4)}+C^{-(p-2)}pI_rC^{-(p-2)}(p-2)I_r\\&= -p(p-2)C^2+p(p-2)C^{-2(p-2)}.\end{align*}
Since $b$ and $b^c$ commute, we have that $bb^c-b^cb=0_{(p-1)r}$. Therefore,
\[2pC^2-2pC^{-2(p-2)}=0_r.\]

Finally, since $p$ is an odd prime, we deduce that $pC^{2p-2}=pI_r\in \GL_r(\Z/p^2\Z)$. Hence, every entry off diagonal entry of $C^{2p-2}$ is divisible by $p$ while every diagonal entry is of the form $1+tp$. Therefore, $\bar{C}^{2p-2}$, which is projection of $C^{2p-2}$ to $\GL_r(p)$, is the identity matrix. But as $n$ is odd, $\bar{C}$ has order $p^r-1$ and we conclude that $r=1$, as desired.

Applying \cite[Proposition 3.8]{p.index3}, we have that $V_{p-2}(p)$ lifts all the way to a $\Q_p$-representation.
\end{proof}

\begin{proposition}\label{Steinberg}
Assume that $p$ is an odd prime and $G=\SL_2(p^r)$. Then $V_{p-1}(p^r)$ lifts to $\Z/p^2\Z$ if and only if $r=1$. Moreover, $V_{p-1}(p)$ lifts to $\Q_p$.
\end{proposition}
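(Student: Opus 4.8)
The module $V_{p-1}(p^r)$ is the Steinberg module, hence projective, so the key will be to exploit this structure together with the explicit matrix computations already set up before \cref{the generic case}. As in \cref{the harder calculation}, I would work in $\mathrm{M}_{pr}(\Z/p^2\Z)$ with $n=p-1$, so $a_0^{p}=a_0^{n+1}=0$ while $a_0^{p-1}\ne 0$: by \cref{Pascal}\eqref{pasc1} the only non-zero $r\times r$ block of $a_0^{p-1}$ sits in position $(p,1)$ and equals $(p-1)!\,I_r$. The first task is to analyse the condition $(a+q)^p = I_{pr}$. Re-examining the expansion of $(a+q)^p$ recorded above, the $R_{2,1}$ block now receives contributions from $pa_0$, from $a_0 q a_0^{p-2}$, and from $q a_0^{p-1}$; the first is $pI_r$, and using $(p-1)!\equiv -1\pmod p$ one gets an equation of the shape $0_r = pI_r + (\text{terms in }Q_{1,p}\text{ and }Q_{2,p})$, the analogue of $(\star_1)$. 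One should also extract the constraint coming from the higher-power terms $\binom{p}{2}a_0^2,\dots$ and the requirement that the whole of $(a+q)^p$ be $I_{pr}$, not merely its $(2,1)$ block; since $a_0^p=0$ but $a_0^{p-1}\ne 0$, the block $R_{?,1}$ hitting $a_0^{p-1}$ forces a relation that pins down $Q_{1,p}$ (roughly $Q_{1,p}\equiv pI_r$ up to a unit), exactly as in the previous proposition.

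\textbf{The commutator step.} With the $Q_{1,p}$ block determined, I would impose the commutation relation $bb^c = b^c b$ for a suitable element $b=a+q$ of order $p$ lying in the normalizer of an abelian Sylow $p$-subgroup (this element exists once $aQ$ is shown to contain order-$p$ elements, which the $(a+q)^p=I$ analysis already handles). Using the formula for $bb^c - b^cb$ recorded above and the block-diagonal form $\mathrm{diag}(C^{p-1},C^{p-3},\dots,C^{-(p-1)})$ of $c$, I would pick off the block — by analogy with the $(1,p-2)$ block used for $V_{p-2}(p^r)$, here presumably the $(1,p-1)$ block — into which only $qa_0^c - q^c a_0$ contribute, substitute the known value of $Q_{1,p}$, and read off a scalar-matrix condition. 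This should force $C^{m}$ to have $p$-power order for some fixed exponent $m$ depending only on $p$ (something like $m = 2p-2$ again, or a small multiple thereof). Since $c$ has order prime to $p$, this bounds $|c|$ by $m$; but $n=p-1$ is even, so $c$ has order $\tfrac{p^r-1}{2}$, and the bound $\tfrac{p^r-1}{2} \le m$ forces $r=1$.

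\textbf{The converse and the $\Q_p$-lifting.} For $r=1$ the module $V_{p-1}(p)$ is the Steinberg module of $\SL_2(p)$, which is projective and therefore — by \cref{Green}, since $\Ext^2_{\FF_pG}(V,V)=0$ for any projective $V$ — lifts to $\Z_p$, hence to $\Q_p$ and in particular to $\Z/p^2\Z$. This simultaneously gives the ``if'' direction and the ``Moreover'' clause. Alternatively one could cite \cite[Proposition 3.8]{p.index3} as was done for $V_{p-2}(p)$, but the projectivity argument is cleaner here.

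\textbf{Main obstacle.} The delicate point is the first step: showing that the requirement $(a+q)^p = I_{pr}$ (not just its $(2,1)$-block vanishing) genuinely pins down $Q_{1,p}$ up to the needed precision, since unlike in \cref{the harder calculation} we now have $a_0^{p-1}\ne 0$, so several blocks of the expansion interact and the binomial-coefficient terms $\binom{p}{k}a_0^k$ for $2\le k\le p-1$, all divisible by $p$, must be tracked. Verifying that $(p-1)!\equiv -1\pmod p$ and the various $\binom{p-1}{k}$ congruences combine to leave exactly the clean relation needed for the commutator step to go through is where the real bookkeeping lies; I expect it to mirror closely, but not be identical to, the $V_{p-2}$ computation, with the parity of $n$ (even rather than odd) being what ultimately changes the final arithmetic from "$c$ has order $p^r-1$" to "$c$ has order $\tfrac{p^r-1}{2}$", in both cases yielding $r=1$.
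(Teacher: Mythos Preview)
Your converse argument via projectivity of the Steinberg module and \cref{Green} is exactly what the paper does. The forward direction, however, does not run as cleanly as you outline, and the plan has a genuine gap in three places. First, the order-$p$ condition does not pin down $Q_{1,p}$: when $n=p-1$ the block $a_0^{p-2}$ has non-zero entries in positions $(p-1,1)$ \emph{and} $(p,1)$ (by \cref{Pascal}\eqref{pasc1} and \eqref{pasc3}), so the $R_{2,1}$ block of $(a+q)^p$ picks up contributions from $Q_{1,p-1}$, $Q_{1,p}$ and $Q_{2,p}$ simultaneously, yielding only the single linear relation $pI_r + Q_{1,p-1} + Q_{1,p} - Q_{2,p}=0_r$. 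No statement of the form ``$Q_{1,p}\equiv pI_r$ up to a unit'' drops out, and examining further blocks of $(a+q)^p$ does not help either: each introduces new $Q_{i,j}$ faster than it eliminates old ones.

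Second, one commutator block is not enough. With three unknown $Q$-blocks in play, the paper must read off the $(1,p-1)$, $(1,p-2)$ and $(2,p-1)$ blocks of $bb^c-b^cb$ and combine the resulting relations with the $R_{2,1}$ equation before any scalar condition on $C$ appears. Third, even then the arithmetic does not close: what emerges is a \emph{product} condition $(C^{-4(p-1)}-I_r)(C^{-2p}-I_r)=0_r$, forcing $p^r-1\mid 4(p-1)$ or $p^r-1\mid 2p$, which still allows $p=3$, $r=2$. That residual case is eliminated by a MAGMA computation, not by the divisibility bound you anticipate; your expectation that $\tfrac{p^r-1}{2}\le m$ directly gives $r=1$ is over-optimistic.
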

\begin{proof}
This time we set $n=p-1$ and we calculate in $\mathrm{M}_{pr}(\Z/p^2\Z)$. As observed above, the $R_{2,1}$ contribution comes from $pa_0$, $a_0qa_0^{p-2}$ and $qa_0^{p-1}$. Since $a_0$ is strictly lower triangular, we see that ${a_0}_{(2, i)}\ne 0_r$ only if $i=1$. Furthermore, again as $a_0$ is strictly lower triangular and as $n=p-1$, we see that ${(a_0^{p-2})}_{(i,j)}\ne 0_r$ only if $(i,j)\in\{(p-1,1), (p,1), (p,2)\}$, and $(a_0^{p-1})_{(i,j)}\ne 0_r$ only if $(i,j)=(p,1)$.

We deduce that \[R_{2,1}=p{a_0}_{(2,1)} + {a_0}_{(2,1)}.Q_{(1,p-1)}.(a_0^{p-2})_{(p-1, 1)}+{a_0}_{(2,1)}.Q_{1,p}.{(a_0^{p-2})}_{(p,1)}+Q_{2,p
}.(a_0^{p-1})_{(p,1)}.\]
We apply \cref{Pascal} to realize that 
\[{a_0}_{(2,1)} = I_r, \;\;\;(a_0^{p-2})_{(p-1, 1)}=(p-2)! I_r,\;\;\; (a_0^{p-2})_{(p,1)}=\binom{p-1}{p-3}(p-2)!\]
\[\text{ and } (a_0^{p-1})_{(p, 1)}=(p-1)!\]
Recall that $(p-1)!\equiv -1 \mod p$, $(p-2)!\equiv 1 \mod p$ and $\binom{p-1}{p-3}\equiv 1 \mod p$. Then
\begin{equation*}
R_{2,1}=pI_r+Q_{1, p-1}+Q_{1,p}-Q_{2,p}.
\end{equation*}
If $a+q$ has order $p$ then $R_{2,1}=0_r$ and we ascertain that
\begin{equation*}
pI_r+Q_{1, p-1}+Q_{1,p}-Q_{2,p}=0. \tag{4.4.1}
\end{equation*}

We focus on the $(1,p-1)$, $(1,p-2)$ and $(2,p-1)$ blocks of $bb^c-b^cb$, noticing that $a_0a_0^c$ and $a_0^ca_0$ have $0_r$ in these positions. Since $b$ and $b^c$ commute, we have that $bb^c-b^cb=0_{pr}$.

Observe that $a_0q^c$ and $a_0^cq$ have $0_r$ in the $(1, p-1)$ position. Hence, we consider the expression $qa_0^c-q^ca_0$ and investigate its $(1,p-1)$ block. Since $a_0$ and $a_0^c$ are strictly lower triangular, they have exactly one non-zero block in the $(p-1)$th column: the $(p, p-1)$ block. Moreover, $c$ is a block diagonal matrix. Using that $pQ_{i,j}=0_r$, we deduce that

\begin{eqnarray*}
(qa_0^c-q^ca_0)_{p, p-1}
&=&Q_{1, p}.(c^{-1})_{p,p}.{a_0}_{(p, p-1)}.c_{p-1, p-1}-(c^{-1})_{1,1}.Q_{1,p}.c_{p,p}.{a_0}_{(p, p-1)}\\
&=&Q_{1, p}.C^n. (p-1)I_r. C^{-(n-2)}- C^{-n}. Q_{1,p}. C^{-n}. (p-1)I_r\\
&=&-C^2Q_{1,p} + C^{-2(p-1)}Q_{1,p}\\
&=&0_r
\end{eqnarray*}

so that

\begin{equation*}
C^{-2(p-1)}Q_{1,p}=C^2.Q_{1,p}.\tag{4.4.2}
\end{equation*}

Performing similar calculations, examining the $(1,p-2)$ position gives

\begin{equation*}
(C^4-C^{-2(p-1)})Q_{1, p}+2(C^{2-2(p-1)}-C^2)Q_{1,p-1} = 0_r\tag{4.4.3}
\end{equation*}

and examining the $(2,p-1)$ position gives

\begin{equation*}
-(C^2+C^{2-2(p-1)})Q_{2, p}+ (C^{2-2(p-1)}-C^2)Q_{1,p-1} = 0_r.\tag{4.4.4}
\end{equation*}

Subtracting twice $(4.4.5)$ from $(4.4.4)$ yields

\begin{equation*}
(C^4-C^{-2(p-1)})Q_{1, p}+2(C^2+C^{2-2(p-1)})Q_{2, p} = 0_r.\tag{4.4.5} 
\end{equation*}

Multiplying $(4.4.1)$ by $2(C^2+C^{2-2(p-1)})(C^{2-2(p-1)}-C^2)$, eliminating $Q_{1,p-1}$ and $Q_{2,p}$ using $(4.4.3)$ and $(4.4.5)$ and finally simplifying using $(4.4.2)$, we obtain
\begin{equation*}
2p(C^{4-4(p-1)}-C^4)+2(C^8-C^6)Q_{1,p} = 0_r.\tag{4.4.6}
\end{equation*}

By $(4.4.2)$ we have that 
$(C^{-2(p-1)}-C^2)Q_{1,p}=0_r$ so that  $(4.4.6)$ yields

\begin{eqnarray*}
&& 2p(C^{4-4(p-1)}-C^4)(C^{-2(p-1)}-C^2)\\
&= & 2pC^6(C^{-4(p-1)}-I_r)(C^{-2p}-I_r)\\
&= & 0_r.
\end{eqnarray*}
Hence, as $p$ is odd, every entry of $C^6(C^{-4(p-1)}-I_r)(C^{-2p}-I_r)$ is a multiple of $p$. Recall that $\ov C$ is the image of $C$ under the natural projection from $\GL_{r}(\Z/p^2\Z)$ to $\GL_{r}(p)$. Then $\ov C$ is cyclic of order $p^r-1$. Since \[\ov C^6(\ov C^{-4(p-1)}-\ov I_r)(\ov C^{-2p}-\ov I_r)=\ov 0_r,\]
at least one of $\ov C^{-4(p-1)}-\ov I_r$ or $\ov C^{-2p}-\ov I_r$ is equal to $\ov 0_r$. Since $\ov C$ has order $p^r-1$, we deduce that $\ov C^{-4(p-1)}-\ov I_r=\ov 0_r$ and therefore $4(p-1)$ must be a multiple of  $p^r-1$. Hence, assuming that $r>1$,we deduce that $p=3$ and $r=2$. We verify computationally using MAGMA \cite{magma} that $V_2(9)$ does not lift when $p=3$ and $r=2$. In fact, we show that the derived subgroup of the subgroup of $\Aut(W)$ generated by the elements of the cosets $aQ$ and $bQ$, where $H=\langle a,b\rangle$, has a quotient isomorphic to a non-split extension $3^6.\PSL_2(9)$. We obtain a contradiction and so $r=1$. This proves the first statement of the result.

We observe that $V_{p-1}(p)$ is irreducible of dimension $p$ and so is the Steinberg module for $\SL_2(p)$. In particular, $V_{p-1}(p)$ is projective, and by \cref{Projective} we deduce that $V_{p-1}(p)$ lifts to a $\Q_p\SL_2(p)$-representation.
\end{proof}

We now turn our attention to the module $V_p(p^r)$. The structure of the following proof is analogous to \cref{Steinberg} and so for the sake of exposition, we conceal some of the details.

\begin{proposition}\label{noplift}
Assume that $p$ is a prime and $G=\SL_2(p^r)$. Then $V_p(p^r)$ lifts to $\Z/p^2\Z$ if and only if $p^r=2$. In particular, $V_2(2)$ lifts to $\Q_2$.
\end{proposition}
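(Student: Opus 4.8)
The backward implication and the last sentence of the statement are immediate. If $p^r=2$ then, as recorded in \cref{sec:structureV}, $V_2(2)\cong V_1(2)\oplus V_0(2)$, where $V_1(2)$ is the (projective) Steinberg module of $\SL_2(2)$ and $V_0(2)$ is the trivial module; by \cref{Green}, exactly as in the proof of \cref{p=2case}, each summand — and hence $V_2(2)$ itself — lifts to a $\Q_2\SL_2(2)$-module, and in particular to $\Z/2^2\Z$.

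For the forward implication the plan is to run the same block-matrix computation as in the proofs of \cref{the harder calculation} and \cref{Steinberg}, now with $n=p$, so that one works in $\mathrm{M}_{(p+1)r}(\Z/p^2\Z)$ with blocks of size $r$. Two features are new here. First, $a_0^{p+1}=0$ but $a_0^{p}$ is itself non-zero, with $(a_0^{p})_{p+1,1}=p!\,I_r\equiv -pI_r\pmod{p^2}$ by \cref{Pascal} and Wilson's theorem. Secondly, there are two extra rows and columns. The first step is to read off the analogue of $(\star_1)$ and $(\star_2)$ from the expansion of $(a+q)^p$ displayed before \cref{the generic case}: the terms $\binom{p}{j}a_0^j$ with $j\ge 2$, and $a_0^{p}$ itself, all have $0_r$ in block position $(2,1)$, and of the products $a_0^jqa_0^{p-1-j}$ only $j=0$ and $j=1$ contribute there. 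When $p\ge 5$ the new blocks $Q_{1,p+1}$ and $Q_{2,p+1}$ enter only through the entries $(a_0^{p-2})_{p+1,1}$ and $(a_0^{p-1})_{p+1,1}=\binom{p}{p-2}(p-1)!$, both of which are divisible by $p$; since $pQ_{i,j}=0_r$ these blocks drop out, and what remains is precisely the relation $(\star_2)$, namely
\begin{equation*}
0_r=pI_r+Q_{1,p-1}+Q_{1,p}-Q_{2,p}.\tag{$\star_3$}
\end{equation*}
I would also expect to need the vanishing of further blocks of $(a+q)^p-I$, such as $R_{3,1}$, which bring in additional blocks of $q$. (For $p=2$ one uses the expansion $(a+q)^2=I+2a_0+a_0^2+a_0q+qa_0$ in the same way; the prime $p=3$ is genuinely special, since there $(a_0^{p-2})_{p+1,1}\equiv 1\pmod p$ so that $Q_{1,p+1}$ does not drop out of $(\star_3)$.)

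Next, choosing $b=a+q$ of order $p$ with $bb^c=b^cb$ — legitimate by the abelian-Sylow argument of \cref{Steinberg} — I would expand
\[bb^c-b^cb=(a_0a_0^c+qa_0^c+a_0q^c)-(a_0^ca_0+a_0^cq+q^ca_0)\]
and evaluate it in several blocks near the lower-left corner, for instance $(1,p-1)$, $(2,p-1)$, $(1,1)$, and so on. In each of these $a_0a_0^c$ and $a_0^ca_0$ contribute $0_r$, and discarding every product containing two $q$'s via $pQ_{i,j}=0_r$, the block reduces to a ``twisted conjugation'' identity of the form $C^{e}Q_{i,k}C^{f}=Q_{i,k}$ with small, explicit exponents. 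Identifying $C$ with a primitive element of $\FF_{p^r}$, as in \cref{Steinberg}, suitable combinations of these identities (together with the relations coming from the $p$-power condition) should force either a divisibility $p^r-1\mid(\text{small explicit integer})$ with no solutions, or that the blocks $Q_{1,p-1},Q_{1,p},Q_{2,p}$ occurring in $(\star_3)$ all vanish; in the latter case $(\star_3)$ becomes $pI_r=0_r$ in $\mathrm{M}_r(\Z/p^2\Z)$, which is absurd. The only configuration that escapes this is $p^r=2$, where $c$ is trivial, the commutation relations are vacuous, and the module does lift by the first paragraph. Hence $V_p(p^r)$ lifts to $\Z/p^2\Z$ if and only if $p^r=2$.

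The step I expect to be the main obstacle — and what makes this genuinely more delicate than \cref{the harder calculation} or \cref{Steinberg} — is the combinatorial bookkeeping forced by the two extra rows and columns: more products $a_0^jqa_0^{p-1-j}$ survive; a couple of Pascal entries beyond those recorded in \cref{Pascal} (such as $(a_0^{p-2})_{p+1,1}$, which one must check is $\equiv 0\pmod p$ for $p\ge5$) are needed; and one has to keep careful track of which binomial coefficients and factorials are units modulo $p$. For instance $\binom{p}{p-1}=p$, so block position $(1,p)$ carries no information and one is driven to the positions $(1,p-1),(1,p-2),\dots$, while $c$ now acts through the longer diagonal $\diag(C^{p},C^{p-2},\dots,C^{-p})$. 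The smallest primes — and in particular $p=3$, where $p-1$ is too small for the divisibility argument to bite — I would dispose of by a direct \textsc{Magma} computation, exactly as the pair $(p,r)=(3,2)$ is handled in the proof of \cref{Steinberg}.
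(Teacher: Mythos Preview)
Your framework matches the paper's and the backward implication is fine. Your claim that $Q_{1,p+1}$ drops out of the $(2,1)$-block relation for $p\ge 5$ is actually correct --- one has $(a_0^{p-2})_{p+1,1}=(p-2)!\,S(p,p-2)$ with $S(p,p-2)=\binom{p}{3}+3\binom{p}{4}\equiv 0\pmod p$ --- but this goes beyond \cref{Pascal}, and the paper does not use it: it keeps the $Q_{1,p+1}$ term in its $(\star_3)$ and neutralises it later via the $(2,p+1)$ block of $bb^c-b^cb$.

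The genuine gap is the endgame. You sketch a dichotomy ``small divisibility or the relevant $Q_{i,j}$ vanish'' and propose \textsc{Magma} for $p=3$, but the paper does something sharper which is not foreshadowed in \cref{the harder calculation} or \cref{Steinberg} and which you have not anticipated. After reading off the five commutator positions $(1,p-1)$, $(2,p-1)$, $(2,p)$, $(2,p+1)$, $(3,p)$, it multiplies $(\star_3)$ through by the resulting annihilating factors so as to eliminate \emph{all} the $Q$-blocks simultaneously, arriving for odd $p$ at the pure relation
\[2p\bigl(C^{4-4p}-C^{2-2p}-C^{6-2p}-C^{4}\bigr)=0_r.\]
The decisive new idea is then that these relations hold with $c$ replaced by any power of $c$; substituting $c^{1+p+\cdots+p^{r-1}}$ replaces $C$ by an element $C'$ of order $p-1$ and collapses the displayed relation to $-4p\,(C')^{4}=0_r$ with $C'$ invertible --- a contradiction valid uniformly for every odd $p$ and every $r\ge 1$, so no separate treatment of $p=3$ is needed. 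For $p=2$ the paper obtains instead $2(C^{-4}-C^{2})(1-C^{2})=0_r$, forcing $2^{r}-1\mid 6$ and hence $r\le 2$, with \textsc{Magma} invoked only for $(p,r)=(2,2)$. Your proposal, as written, stops short of producing any such concrete relation in $C$ and does not contain the substitution device, so the forward implication is not yet established.
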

\begin{proof}
Assume that $p^r=2$. Then $V_2(2)\cong V_1(2)\oplus V_0(2)$, where $V_0(2)$ is the trivial module for $\FF_2G$. By \cref{Steinberg}, we have that $Y\oplus \Q_2$ is a lift of $V_2(2)$ to $\Q_2$ where $Y$ is the lift of $V_1(2)$ to $\Q_2$ and $\Q_2$ is a $1$-dimensional trivial module for $\Q_2 G$.

For the remainder of the proof, aiming for a contradiction, we suppose that $p^r>2$. This time, we obtain the equation
\begin{equation*}
0_r=R_{2,1}=pI_r+Q_{1, p-1}+Q_{1,p}+Q_{1, p+1}(a_0^{p-2})_{(p+1,1)}-Q_{2,p}. \tag{4.5.1}
\end{equation*}

The exact value of $(a_0^{p-2})_{(p+1,1)}$ can be given, but it is not important for our purposes.

We focus on the $(1,p-1)$, $(2,p-1)$, $(2,p)$, $(2, p+1)$ and $(3, p)$ blocks of $bb^c-b^cb$, noticing that $a_0a_0^c$ and $a_0^ca_0$ have $0_r$ in these positions. Since $b$ and $b^c$ commute, we have that $bb^c-b^cb=0_{pr}$.

An analysis of the $(1, p-1)$ position gives

\begin{equation*}
C^{-2p}Q_{1,p}- C^2Q_{1, p+1} = 0_r.
\end{equation*}

Examining the $(2,p)$ position gives

\begin{equation*}
(C^{-2p}-C^2)Q_{1,p} = 0_r.
\end{equation*}

The $(2, p-1)$ position yields

\begin{equation*}
(C^{4-2p}-C^2)(Q_{2, p}+Q_{1, p-1}) = 0_r.
\end{equation*}

and examining the $(2,p+1)$ position gives

\begin{equation*}
(C^{-2p}-C^2)Q_{1,p+1} = 0_r.
\end{equation*}

Finally, the $(3,p)$ position produces

\begin{equation*}
(C^{2-2p}-C^4)Q_{1, p}+2(C^{4-2p}-C^2)Q_{2, p} = 0_r.
\end{equation*}

Noticing the special roles the expressions $2(C^{4-2p}-C^2)$ and $C^{-2p}-C^2$ play in annihilating the matrices $Q_{i,j}$, applying the above relations to $(4.5.1)$ we get for every odd prime $p$:

\begin{equation*}
2p(C^{4-4p}-C^{2-2p}-C^{6-2p}-C^4) = 0_r.\tag{4.5.2}
\end{equation*}

When $p=2$ we get the simpler equation

\begin{equation*}
2(C^{-4}-C^2)(1-C^2)= 0_r.
\end{equation*}

When $p=2$, we deduce that $\bar{C}^6 =I_r\in \GL_r(2)$, where $\bar{C}$ is the image of $C$ under the projection to $\mathrm{M}_{r}(\FF_2)$. Since $\bar{C}$ has order $2^r-1$ and $r>1$, we deduce that $r=2$. We treat this case computationally using MAGMA \cite{magma}. Explicitly, we show that the subgroup of $\Aut(W)$ generated by the elements of the cosets $aQ$ and $bQ$, where $H=\langle a,b\rangle$, has a quotient isomorphic to $\SL_2(5)$ and so there is no appropriate $\SL_2(4)$ subgroup of $\Aut(W)$. Hence, there is no lift in this case.

When $p$ is odd, observe that $(4.5.2)$ holds for any power of $c$. Indeed, we have that $c^{p^{r-1}+\dots+1}$ has order $p-1$ and we deduce that
\[2p(1-1-C^{4(p^{r-1}+\dots+1)}-C^{4(p^{r-1}+\dots+1)}) = -4p(C^{4(p^{r-1}+\dots+1)})= 0_r\] when $p$ is odd. But then $pC^{4(p^{r-1}+\dots+1)}$ is the zero matrix in $\mathrm{M}_{r}(\Z/p^2\Z)$, so that all entries of $C^{4(p^{r-1}+\dots+1)}$ are divisible by $p$. Then, for $\bar{C}$ the image of $C$ under the projection to $\mathrm{M}_{r}(\FF_p)$, we have that $\bar{C}^{4(p^{r-1}+\dots+1)}$ is the zero matrix in $\mathrm{M}_{r}(\FF_p)$, a contradiction as $\bar{C}$ is invertible.
\end{proof}

We complete the proof of \cref{MT} by determining when $\Lambda(p^r)$ lifts.

\begin{proposition}
Assume that $p$ is a prime and $G=\SL_2(p^r)$. Then $\Lambda(p^r)$ lifts to $\Z/p^2\Z$ if and only if $p^r=2$. In particular, $\Lambda(2)$ lifts to $\Q_2$.
\end{proposition}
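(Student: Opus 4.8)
The plan is to reduce the statement to \cref{noplift} by exploiting the definition of $\Lambda(p^r)$ as the dual of $V_p(p^r)$. The one ingredient I need to supply is the general fact that, for a finite group $G$, an $\FF_pG$-module $V$ lifts to $\Z/p^2\Z$ if and only if its dual $V^*$ does. Granting this, the equivalence is immediate: $\Lambda(p^r)=V_p(p^r)^*$ lifts to $\Z/p^2\Z$ exactly when $V_p(p^r)$ does, which by \cref{noplift} happens precisely when $p^r=2$. For the final clause, when $p^r=2$ we have $\Lambda(2)\cong V_2(2)$, and the proof of \cref{noplift} already exhibits a lift of $V_2(2)$ to $\Q_2$; transporting it along this isomorphism gives a lift of $\Lambda(2)$ to $\Q_2$.

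To establish the duality fact I would argue directly with lifts rather than through $\Ext$ (Green's criterion, \cref{Green}, is only sufficient and so cannot detect the ``only if'' direction). Start with a free $\Z/p^2\Z G$-module $W$ of rank $n=\dim_{\FF_p}V$ with $W/pW\cong V$ as $\FF_pG$-modules; equivalently, a subgroup $H\le \GL_n(\Z/p^2\Z)$ mapping isomorphically onto $G$ under $\pi_2$ and realizing $V$ after reduction. Put $W^*=\Hom_{\Z/p^2\Z}(W,\Z/p^2\Z)$ with the contragredient $G$-action; concretely $H^*=\{(h^{-1})^{\mathrm{T}}:h\in H\}$, which is again a subgroup of $\GL_n(\Z/p^2\Z)$ mapping isomorphically onto $G$. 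Then $W^*$ is free of rank $n$ over $\Z/p^2\Z$, and I would check that the natural map $W^*/pW^*\to (W/pW)^*$ is an isomorphism of $\FF_pG$-modules, so that $W^*$ is a lift of $V^*$. Since $(V^*)^*\cong V$ and $(W^*)^*\cong W$, running the argument in both directions gives the stated equivalence.

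The potential obstacle is a matter of care rather than depth: one must verify that $W^*$ remains free over $\Z/p^2\Z$ and that the reduction map $W^*/pW^*\to (W/pW)^*$ is simultaneously bijective and $G$-equivariant for the contragredient actions. Bijectivity is exactly where finiteness and freeness of $W$ over $\Z/p^2\Z$ enter, since $\Hom_{\Z/p^2\Z}(W,-)$ commutes with $-\otimes_{\Z/p^2\Z}\FF_p$ when $W$ is finitely generated and free. Once this short lemma is recorded, the proposition follows from \cref{noplift} with no further computation, completing the proof of \cref{MT}.
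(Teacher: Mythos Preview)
Your proposal is correct and takes essentially the same approach as the paper: both reduce to \cref{noplift} via the contragredient construction $g\mapsto (g^{-1})^{\mathrm T}$ on a putative lift, so that a lift of $\Lambda(p^r)=V_p(p^r)^*$ yields one of $V_p(p^r)$. The only cosmetic difference is that you package this as a general ``$V$ lifts iff $V^*$ lifts'' lemma, whereas the paper applies the contragredient directly in the one direction needed; your extra care about freeness of $W^*$ and the identification $W^*/pW^*\cong(W/pW)^*$ is precisely what makes the paper's terse sentence rigorous.
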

\begin{proof}
We observe that when $p^r=2$, $\Lambda(2)\cong V_2(2)\cong V_1(2)\oplus V_0(2)$ where $V_0(2)$ is the trivial module for $\FF_2\SL_2(2)$. Then the result for $p^r=2$ follows directly from \cref{noplift}. Hence, we may suppose that $p^r>2$ for the remainder of the proof.

Aiming for a contradiction, assume that $\Lambda(p^r)$ lifts to $\Z/p^2\Z$. We define a dual $G$-action on $(\Z/p^2\Z)^{p+1}$ by sending $g\in G\le \GL_{(p+1)r}(\Z/p^2\Z)$ to $(g^{-1})^T$.  Write $\hat{W}$ for the $(\Z/p^2\Z)G$-module resulting from this new action. Then $g$ acts on $\hat{W}/p\hat{W}$ as $(g^{-1})^T$ acts on $\Lambda(p^r)$ from which we deduce that $\hat{W}/p\hat{W}\cong V_p(p^r)$ as an $\FF_p G$-module. But then $\hat{W}$ would be a lift of $V_p(p^r)$, which contradicts \cref{noplift}.
\end{proof}

\bibliographystyle{alpha}
\bibliography{my.books}
\end{document}